\documentclass[a4paper,11pt,reqno]{amsart}
\usepackage{amsmath,amssymb}
\usepackage{mathrsfs}
\usepackage{amsfonts}
\usepackage[dvipsnames]{xcolor}
\usepackage{graphicx}
\usepackage{float}  
\usepackage{cite}
\usepackage{cases}
\usepackage{comment}
\usepackage{indentfirst}
\allowdisplaybreaks

\theoremstyle{plain}
\newtheorem{theorem}{Theorem}[section]
\newtheorem{lemma}[theorem]{Lemma}
\newtheorem{proposition}[theorem]{Proposition}

\theoremstyle{definition}

\theoremstyle{remark}

\numberwithin{equation}{section}

\usepackage[textwidth=16.0cm,textheight=21.0cm,centering]{geometry}
\usepackage[colorlinks=true,citecolor=blue,linkcolor=blue]{hyperref}

\begin{document}
\title[On the Dirichlet problem for the degenerate $2$-Hessian equation]
{On the Dirichlet problem for the degenerate $2$-Hessian equation}

\author[Y. Lyu]{Yasheng Lyu}
\address{School of Mathematics and Statistics, Xi'an Jiaotong University, Xi'an, Shaanxi 710049, People's Republic of China}
\email{lvysh21@stu.xjtu.edu.cn}

\date{}
\keywords{Degenerate $2$-Hessian equation, Dirichlet problem, Global $C^{2}$ estimates, Semiconvexity.}

\begin{abstract}
In this paper, we establish global $C^{1,1}$ solvability of the Dirichlet problem for the degenerate $2$-Hessian equation on bounded strictly mean convex $C^{3,1}$ domains $\Omega$, with general $C^{3,1}$ boundary values and nonnegative right-hand sides $f\in C^{1,1}(\overline{\Omega})$, resolving the $k=2$ case of a longstanding open problem.
We uncover a global semiconvexity structure for general $2$-admissible solutions by proving a lower bound for $\sigma_{3}[D^{2}u]$ independent of $\inf_{\Omega}f$.
For $3\leq k\leq n-1$, we also establish global semiconvexity for $k$-admissible solutions on the unit ball with $C^{3}$ boundary values and $f^{1/(k-1)}\in C^{1,1}(\overline{B}_{1})$, with a bound independent of $\inf_{B_{1}}f$.
A counterexample shows that the $C^{3,1}$ boundary value assumption for global $C^{1,1}$ solvability is sharp in the H\"older scale.
\end{abstract}

\maketitle

\section{Introduction}

In this paper, we consider the Dirichlet problem for the degenerate $k$-Hessian equation with $k=2$
\begin{equation}\label{eqn1.1}
\begin{cases}
\sigma_{k}\big[D^{2}u\big]=f\geq0\quad\text{in}\ \Omega,\\
u=\varphi\quad\text{on}\ \partial\Omega,
\end{cases}
\end{equation}
where $\Omega$ is a bounded domain in $\mathbb{R}^{n}$, $n\geq2$, and $D^{2}u$ denotes the Hessian matrix of $u$.
For an $n\times n$ real symmetric matrix $A$, we denote by $\sigma_{k}[A]$ the $k$-th elementary symmetric polynomial of its eigenvalues $\lambda(A)=(\lambda_{1},\dots,\lambda_{n})$; that is,
\[
\sigma_{k}[A]=\sigma_{k}(\lambda(A))
:=\sum_{1\leq i_{1}<\cdots<i_{k}\leq n}\lambda_{i_{1}}\cdots\lambda_{i_{k}},\quad k=1,2,\dots,n.
\]
Equivalently, $\sigma_{k}[A]$ is the sum of all $k\times k$ principal minors of $A$.

Following Caffarelli-Nirenberg-Spruck \cite{CNS}, a function $u\in C^{2}(\Omega)\cap C^{0}(\overline{\Omega})$ is called $k$-admissible if $\lambda(D^{2}u(x))\in\Gamma_{k}$ for every $x\in\Omega$, where
\[
\Gamma_{k}:=\{\lambda\in\mathbb{R}^{n}:\ \sigma_{j}(\lambda)>0\ \text{for all}\ 1\leq j\leq k\}.
\]
We also use $\Gamma_{k}$ for the corresponding cone of symmetric matrices.
The elementary symmetric functions satisfy
\[
\frac{\partial\sigma_{k}(\lambda)}{\partial\lambda_{i}}
=\sigma_{k-1;i}(\lambda)>0\quad\text{in }\Gamma_{k},\quad 1\leq i\leq n,
\]
where the index after the semicolon denotes deletion of that component.
Moreover, $\sigma_{k}^{1/k}$ is concave on $\Gamma_{k}$, and the matrix operator $\sigma_{k}^{1/k}[A]$ is elliptic and concave there; see \cite{CNS}.
For degenerate equations, admissibility is understood in the closed cone $\overline{\Gamma}_{k}$.
For $C^{1,1}$ solutions, both the equation and admissibility are understood almost everywhere.

For $1\leq k\leq n-1$, a hypersurface is $k$-convex if its principal curvatures $\kappa=(\kappa_{1},\dots,\kappa_{n-1})$ satisfy $\sigma_{j}(\kappa)\geq0$ for $1\leq j\leq k$, and strictly $k$-convex if these quantities have a uniform positive lower bound.
For boundaries of domains, we use the curvature convention of \cite{CNS}, with positive principal curvatures on the unit sphere.
In particular, $1$-convexity is mean convexity.
Caffarelli-Nirenberg-Spruck \cite{CNS} proved that smooth positive right-hand sides and smooth boundary values give a unique smooth admissible solution on smooth strictly $(k-1)$-convex domains.
For constant boundary values and $f>0$ on $\overline{\Omega}$, strict $(k-1)$-convexity is also necessary for a $C^{2}(\overline{\Omega})$ admissible solution.
Subsolution and Hessian quotient extensions were developed by Guan \cite{Guan1994} and Trudinger \cite{Trudinger}; for Hessian measures, variational methods, and an overview of the theory, see \cite{TWII,CW,WangHessian}.
The equation is nondegenerate when $\inf_{\Omega} f>0$.
Here we allow $f$ to vanish on $\overline{\Omega}$ and seek estimates independent of a positive lower bound for $f$.

For the Monge-Amp\`ere equation, $k=n$, after the early work of Trudinger-Urbas \cite{TU1983,TU1984}, Caffarelli-Nirenberg-Spruck \cite{CNSDegenerate} proved global $C^{1,1}$ solvability for the homogeneous equation on uniformly convex $C^{3,1}$ domains with $C^{3,1}$ boundary values.
Boundary estimates and subsolution approaches were further developed in \cite{Hong,Guan1997,GuanBo}.
Guan-Trudinger-Wang \cite{GTW} proved global $C^{1,1}$ solvability on uniformly convex $C^{3,1}$ domains with general $C^{3,1}$ boundary values under
\[
f^{1/(n-1)}\in C^{1,1}(\overline{\Omega}),\quad f\geq0.
\]
Their estimate is independent of $\inf_{\Omega} f$, and the exponent $1/(n-1)$ is optimal by Wang's counterexamples \cite{Wang1995}.

For general Hessian equations, Trudinger \cite{TrudingerBalls} treated degenerate equations in balls, and Krylov developed probabilistic and analytic approaches to global derivative estimates for degenerate fully nonlinear equations \cite{KrylovValue,KrylovWeak,KrylovBarriers,KrylovGeneral,KrylovBellman}.
Ivochkina-Trudinger-Wang \cite{ITW} gave a PDE proof of global second derivative estimates independent of the positive lower bound for the right-hand side, yielding $C^{1,1}$ solvability for degenerate $k$-Hessian equations under $f^{1/k}\in C^{1,1}(\overline{\Omega})$ on strictly $(k-1)$-convex domains.
They proposed extending this result to
\begin{equation}\label{eqn1.optimal}
f^{1/(k-1)}\in C^{1,1}(\overline{\Omega}),\quad 2\leq k\leq n-1;
\end{equation}
see \cite[Section 4.2]{ITW}.
The optimality of this exponent was pointed out by them with no proof, and was proved in detail by Dinew-Pli\'s-Zhang \cite[Proposition 1.3]{DPZ}.
Under \eqref{eqn1.optimal}, Dong \cite{Dong} obtained global estimates for homogeneous boundary values.
Jiao-Wang \cite{JW} established global $C^{1,1}$ estimates for convex solutions with general $C^{3,1}$ boundary values on uniformly convex $C^{3,1}$ domains.
In \cite{LyuD}, the author obtained a weak interior estimate for a positive right-hand side satisfying the optimal condition on a neighborhood of $\overline{\Omega}$, and a boundary estimate under an additional lower bound for $\Delta u$ near boundary points where $\Delta u$ is large.
Global $C^{1,1}$ solvability under \eqref{eqn1.optimal} remained open for general admissible solutions with general boundary values.

We resolve this problem for $k=2$ on strictly mean convex domains.
The main new ingredient is a global lower bound for $\sigma_{3}[D^{2}u]$, which yields semiconvexity uniformly as the right-hand side vanishes.
In the following estimates, dependence on $\Omega$ includes its diameter, the $C^{3,1}$ bounds in a fixed finite boundary atlas, a fixed boundary neighborhood, and a positive lower bound for the mean curvature.

\begin{theorem}\label{thm1.1}
Let $\partial\Omega\in C^{3,1}$ be strictly mean convex, and let $\varphi\in C^{3,1}(\partial\Omega)$ and $f\in C^{1,1}(\overline{\Omega})$ with $f>0$ on $\overline{\Omega}$.
Let $u\in C^{3,1}(\overline{\Omega})$ be a $2$-admissible solution of \eqref{eqn1.1} with $k=2$.
Then
\[
\sigma_{3}[D^{2}u]\geq-C\quad\text{on }\overline{\Omega},
\]
which implies $D^{2}u\geq-CI$, where $C$ depends only on $n$, $\Omega$, $\|\varphi\|_{C^{3,1}(\partial\Omega)}$, and $\|f\|_{C^{1,1}(\overline{\Omega})}$, but is independent of $\inf_{\Omega} f$.
\end{theorem}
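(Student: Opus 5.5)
Our plan is to apply a maximum principle to a suitable function of $\sigma_3[D^2u]$, using that $\sigma_3$ is controlled along the linearized $\sigma_2$ operator. Write $F^{ij}=\partial\sigma_2/\partial u_{ij}=\sigma_1\delta_{ij}-u_{ij}$ and $L=F^{ij}\partial_{ij}$.

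\textbf{The implication $\sigma_3\ge -C\Rightarrow D^2u\ge -CI$.} This part is algebraic. For $\lambda\in\overline\Gamma_2$ ordered with $\lambda_n$ the smallest, suppose $\lambda_n<0$. Using $\sigma_3=\lambda_n\sigma_{2;n}+\sigma_{3;n}$ and $\sigma_2=\lambda_n\sigma_{1;n}+\sigma_{2;n}=f$, we get
\[
\sigma_3=\lambda_n\bigl(f-\lambda_n\sigma_{1;n}\bigr)+\sigma_{3;n}.
\]
Here $\sigma_{1;n}\ge\sigma_1$ is large. Since $\sigma_{3;n}\le C_n\sigma_{1;n}^3$, a naive bound is not enough. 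We therefore plan to use the Maclaurin-type inequality $\sigma_{3;n}\le \sigma_{2;n}^2/(3\sigma_{1;n})\cdot c$ together with $\sigma_{2;n}=f-\lambda_n\sigma_{1;n}$. Setting $t=-\lambda_n\sigma_{1;n}\ge0$, this gives
\[
\sigma_3\le -t\,\lambda_n...
\]
Since $\sigma_3\ge -C$, we can solve for $|\lambda_n|\le C'$ after a careful accounting. We also use the bounds $|u|,|Du|\le C$ and the fact that $\sigma_1$ is bounded below. An alternative route is the known identity that on $\Gamma_2$ with $\sigma_3\ge -C$ one has $\lambda_{\min}\ge -C'(1+f)$ up to constants. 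We will state this as a lemma.

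\textbf{The lower bound on $\sigma_3$.}

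1. \emph{Structure.} The key structural fact is that $\sigma_3$ is a natural quantity for $\sigma_2=f$. Differentiating the equation twice gives $F^{ij}u_{ijk}=f_k$ and
\[
F^{ij}u_{ijkk}+F^{ij,rs}u_{ijk}u_{rsk}=f_{kk}.
\]
Since $\sigma_2$ is quadratic, $F^{ij,rs}$ is constant: $\sum_k F^{ij,rs}u_{ijk}u_{rsk}=\sum_k\bigl((\Delta u_k)^2-|D^2u_k|^2\bigr)$.

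2. \emph{Test quantity.} We will take $Q=\sigma_3[D^2u]+A|x|^2+B\,u$-type barrier terms, or better $Q=\sigma_3/\sigma_1+\dots$. At an interior minimum we compute $L\sigma_3$ using $\partial\sigma_3/\partial u_{ij}=\sigma_2\delta_{ij}-\sigma_1u_{ij}+(u^2)_{ij}$. The third-order terms must be shown to have a good sign modulo $f$ terms. The hope is that the identity $\sigma_1\sigma_2-3\sigma_3=\sum\lambda_i\sigma_{1;i}\lambda\dots$ produces a square structure after using $F^{ij}u_{ijk}=f_k$. Only $|Df|\le C$ and $D^2f\ge -C$ should enter; note that $f^{1/(k-1)}=f$ when $k=2$, which is why $C^{1,1}$ of $f$ suffices. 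Controlling these third-order terms at the minimum point is the \textbf{main obstacle}. To handle them, we would first try diagonalizing and using the first-order condition $\partial_k\sigma_3=-A\,\partial_k(\cdots)$ to substitute out mixed derivatives, in the spirit of the Guan--Trudinger--Wang Monge--Amp\`ere argument.

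3. \emph{Boundary estimate.} On $\partial\Omega$ we need $\sigma_3\ge -C$. Using the $C^{3,1}$ data and strict mean convexity, we would invoke the boundary $C^2$ estimates (double normal via $u_{\nu\nu}$ from the equation, tangential-normal via barriers). This gives $|D^2u|\le C$ on $\partial\Omega$ independent of $\inf f$, as for $k=2$ in Ivochkina--Trudinger--Wang. The boundary bound then follows trivially.

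Combining the interior minimum-principle bound with the boundary bound gives $\sigma_3\ge -C$ on $\overline\Omega$, and then the algebraic lemma gives $D^2u\ge -CI$.
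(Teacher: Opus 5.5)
Your proposal has two genuine gaps, and the boundary step is the more serious one. You assert that boundary $C^2$ estimates give $|D^2u|\le C$ on $\partial\Omega$ independently of $\inf f$, ``as for $k=2$ in Ivochkina--Trudinger--Wang''. That result requires $f^{1/2}\in C^{1,1}$. Under $f\in C^{1,1}$, the boundary double normal bound is precisely the open problem. The boundary equation $f=u_{\nu\nu}\sigma_2^{\nu\nu}+\sigma_2[(u_{\alpha\beta})]-\sum_\alpha u_{\alpha\nu}^2$ controls $u_{\nu\nu}$ only through a positive lower bound on $\sigma_2^{\nu\nu}=\sum_\alpha u_{\alpha\alpha}$, and no such bound survives as $\inf f\to0$ for general $\varphi$. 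In the paper, $\sup_{\partial\Omega}u_{\nu\nu}$ is bounded only in Section 3. That argument uses the semiconvexity $D^2u\ge -KI$ supplied by this very theorem, through \eqref{eqn3.spectral}, Lemma \ref{lem3.trace}, and \eqref{eqn3.curvature}, so your step 3 is circular.

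The missing idea is that the boundary bound on $\sigma_3$ does not need $u_{\nu\nu}$ to be bounded. With bounded tangential and mixed derivatives, $\sigma_3[D^2u]\ge u_{\nu\nu}\sigma_2[(u_{\alpha\beta})]-C$. So it suffices to show $u_{\nu\nu}\sigma_2^{\nu\nu}\le C$ and $\sigma_2[(u_{\alpha\beta})]\ge f-C\sigma_2^{\nu\nu}$; negative $u_{\nu\nu}$ is harmless since $u_{\nu\nu}>-\sigma_2^{\nu\nu}\ge -C$. Both inequalities follow from a bound on $Z=\max_{\partial\Omega}(f-\sigma_2[(u_{\alpha\beta})])/\sigma_2^{\nu\nu}$. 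The paper gets this bound by building, via the implicit function theorem and strict mean convexity, a $C^{1,1}$ upper support for $u_n$ at a maximizing point of $Z$. Comparing $u_n$ with this support plus a local barrier gives $Z\le u_{nn}(0)\le C$.

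The interior step is the second gap. You correctly locate the obstacle in the third-order terms but offer only the hope of a square structure, and this is where the substance of the proof lies. With $q=-\sigma_3$, and after discarding nonnegative terms with three distinct indices, one gets $\sigma_2^{ij}q_{ij}\ge-\sum_i\sigma_{2;i}f_{ii}+\sum_\ell Q_\ell((u_{ii\ell})_i)$, where the quadratic forms $Q_\ell$ are indefinite. Lemma \ref{thm2.algebra} shows $Q_\ell(X)\ge-C_n\frac{\sigma_{1;\ell}}{q}(\sum_i\sigma_{2;i}X_i)^2-C_n(\sum_i\sigma_{1;i}X_i)^2$, but only in the regime $q\ge C_n\sigma_2^{3/2}$. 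Since $\sum_i\sigma_{2;i}u_{ii\ell}=-q_\ell$ and $\sum_i\sigma_{1;i}u_{ii\ell}=f_\ell$, this becomes $\sigma_2^{ij}q_{ij}\ge-\sum_i\sigma_{2;i}f_{ii}-C|Df|^2-\frac{C}{q}\sigma_2^{ij}q_iq_j$.

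The choice of test function also matters. Because $\sigma_{2;i}$ takes both signs on $\Gamma_2$, the $f_{ii}$ term needs the two-sided bound $|D^2f|\le C$, not merely $D^2f\ge -C$. That term is only bounded by $C(1+\Delta u+\sqrt{q\Delta u})$, and $q$ can be comparable to $(\Delta u)^3$. An additive barrier such as $A|x|^2$ gains only $A\Delta u$, which does not dominate $\sqrt{q\Delta u}$. The paper uses $e^{\kappa|x|^2}q^N$. The power absorbs the $q^{-1}\sigma_2^{ij}q_iq_j$ term, and the weight produces $\kappa(n-1)(\Delta u)e^{\kappa|x|^2}q^N$. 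This beats $Cq^{N-1}(1+\Delta u+\sqrt{q\Delta u})$ for large $q$, because $q\le\frac{2}{3}\sigma_1^3$ gives $\sqrt{\Delta u/q}\le Cq^{-2/3}\Delta u$.

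Finally, your algebraic step $\sigma_3\ge -C\Rightarrow D^2u\ge -CI$ can be completed via Newton's inequality for the deleted vector, but as written it trails off. The identity $\sum_i\sigma_{1;i}\lambda_i^2=\sigma_1\sigma_2-3\sigma_3$ gives it directly: $\lambda_{\min}=-c<0$ forces $(\Delta u)c^2\le f\Delta u+3q_+$ and $c\le\Delta u$, hence $c^3\le fc+3q_+$.
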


The semiconvexity estimate supplies the spectral control needed for the boundary double normal derivative estimate and leads to the following global solvability result.

\begin{theorem}\label{thm1.2}
Let $\partial\Omega\in C^{3,1}$ be strictly mean convex, and let $\varphi\in C^{3,1}(\partial\Omega)$ and $f\in C^{1,1}(\overline{\Omega})$ with $f\geq0$ on $\overline{\Omega}$.
Then \eqref{eqn1.1} with $k=2$ admits a unique $2$-admissible solution $u\in C^{1,1}(\overline{\Omega})$.
\end{theorem}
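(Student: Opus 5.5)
Uniqueness comes straight from the comparison principle for degenerate elliptic equations in $\overline{\Gamma}_2$. For existence we approximate and pass to the limit. Take $f_\varepsilon := f+\varepsilon$, which is positive and has $\|f_\varepsilon\|_{C^{1,1}}\le \|f\|_{C^{1,1}}+1$. If needed, also mollify $\partial\Omega$, $\varphi$ and $f$ so that the Caffarelli--Nirenberg--Spruck theory \cite{CNS} applies on strictly mean convex ($=$ strictly $1$-convex) domains; this gives a smooth $2$-admissible solution $u_\varepsilon\in C^{3,1}(\overline{\Omega})$ of $\sigma_2[D^2u_\varepsilon]=f_\varepsilon$, $u_\varepsilon=\varphi$ on $\partial\Omega$. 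The goal is then the uniform bound
\[
\|u_\varepsilon\|_{C^{1,1}(\overline\Omega)}\le C,
\]
with $C$ depending on $n$, $\Omega$, $\|\varphi\|_{C^{3,1}}$ and $\|f\|_{C^{1,1}}$ but not on $\varepsilon$. Once this holds, Arzel\`a--Ascoli gives a subsequence converging in $C^{1,\alpha}$ to some $u\in C^{1,1}(\overline\Omega)$. Degenerate elliptic equations are stable under uniform limits in the viscosity sense, and $D^2u_\varepsilon\rightharpoonup D^2u$ weak-$*$. Because $u$ is twice differentiable a.e., the equation $\sigma_2[D^2u]=f$ and the condition $\lambda(D^2u)\in\overline\Gamma_2$ hold a.e., and $u=\varphi$ on $\partial\Omega$.

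The $C^0$ and $C^1$ bounds are standard. For the lower bound, compare with the harmonic extension of $\varphi$ shifted by a subsolution of the form $A(|x|^2-R^2)$. For the upper bound, note that $\Delta u\ge0$, so $u$ lies below the harmonic extension. On the boundary, barriers built from the distance function give the gradient bound, using strict mean convexity. The global gradient bound then follows from the usual maximum principle for $|Du|^2+$ an auxiliary term; for $\sigma_2$ this is not sensitive to $\inf f$. For second derivatives we use the ITW-type argument \cite{ITW}: for $k=2$, the condition $f^{1/(k-1)}=f\in C^{1,1}$ is exactly what the hypothesis provides. Since $\Delta u>0$, it is enough to bound $\Delta u$ from above, or equivalently $\lambda_{\max}$. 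Theorem~\ref{thm1.1} gives $D^2u_\varepsilon\ge -CI$ uniformly, so an upper bound on $\Delta u_\varepsilon$ controls every eigenvalue. For the interior/global step, differentiate the equation twice in a direction $\xi$ and apply the maximum principle to $u_{\xi\xi}+$ lower order terms. The term $(f_\varepsilon)_{\xi\xi}$ is bounded. The bad concavity terms are harmless: concavity of $\sigma_2^{1/2}$ gives $\sigma_2^{ij,kl}u_{ij\xi}u_{kl\xi}\le \frac{1}{2}(f_\varepsilon)_\xi^2/f_\varepsilon$. Since $(f_\varepsilon)_\xi^2\le Cf_\varepsilon$ for nonnegative $C^{1,1}$ functions, this quotient is bounded. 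The maximum of $u_{\xi\xi}$ is therefore reduced to the boundary.

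On the boundary, the tangential-tangential derivatives come from $\varphi$ and the curvature. The tangential-normal derivatives follow from the standard barrier argument for the linearized operator $\sigma_2^{ij}\partial_{ij}$ applied to $T(u-\varphi)$, combined with $\sum\sigma_2^{ii}=(n-1)\Delta u\ge c>0$. This step needs $\varphi\in C^{3,1}$ and $\partial\Omega\in C^{3,1}$, and it does not need $\inf f$. The double normal derivative $u_{\nu\nu}$ is the main obstacle, because the classical CNS bound is proved through $f\ge c_0>0$. The plan is as follows. Expanding the equation at $x_0\in\partial\Omega$,
\[
\sigma_2[D^2u]=u_{\nu\nu}\,\sigma_1[D'^2u]+\sigma_2[D'^2u]-\sum_{\alpha}u_{\alpha\nu}^2 ,
\]
where $D'^2u$ is the tangential block. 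Since $u_{\alpha\nu}$ and $D'^2u$ are already bounded, it suffices to show $\sigma_1[D'^2u]\ge c>0$ at any point where $u_{\nu\nu}$ is large. We argue by contradiction: suppose $u_{\nu\nu}\to\infty$ while $\sigma_1(D'^2u)\to0$. Then $D^2u\ge -CI$ forces every tangential eigenvalue to be $\ge -C$ while their sum tends to $0$, so all of them are bounded. One eigenvalue is then huge and the rest are bounded below, and using admissibility we get $\sigma_3\approx u_{\nu\nu}\,\sigma_2(D'^2u)$ up to bounded errors. This is where Theorem~\ref{thm1.1}'s bound $\sigma_3\ge -C$ enters. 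It yields $\sigma_2(D'^2u)\ge -C/u_{\nu\nu}$. Together with $\sigma_1(D'^2u)$ small, this says the tangential block is almost in $\overline\Gamma_2$ with trace near zero, so it is nearly zero. On the boundary, however, $D'^2u=(\varphi\text{-Hessian}) -u_\nu\,\mathrm{II}$. The next task is to minimize the quantity $\sigma_1(D'^2u)=\Delta_{\partial\Omega}\varphi-(n-1)Hu_\nu$ over $\partial\Omega$ in the Trudinger style. We pair this with a barrier on a boundary strip whose leading term is $-(n-1)H\cdot u_\nu$ and use strict mean convexity $H\ge c$ to get a uniform lower bound on $\sigma_1(D'^2u)$ at its minimum point, and hence a contradiction. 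I expect the delicate part to be making this minimum-point argument independent of $\inf f$. The semiconvexity from Theorem~\ref{thm1.1} should be exactly what rules out the degenerate configuration.
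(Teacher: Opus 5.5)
\textbf{The gap is in the boundary double normal estimate, which is the real content of the theorem.} Your outline otherwise follows the paper's architecture: approximation, $C^1$ bounds, semiconvexity from Theorem~\ref{thm1.1}, and reduction of $\sup_\Omega\Delta u$ to $\sup_{\partial\Omega}u_{\nu\nu}$.

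You reduce the double normal estimate to a positive lower bound for the tangential trace $\sigma_1[D'^2u]=\sigma_2^{\nu\nu}$. You then plan to get $\min_{\partial\Omega}\sigma_1[D'^2u]\ge c>0$ from a Trudinger-type minimum-point argument. That bound is false for degenerate problems. The function $u=(1-x_n)^2$ solves $\sigma_2[D^2u]=0$ in $B_1$ with smooth data and has zero tangential Hessian at $e_n$. Its perturbations $u+\frac{\varepsilon}{2}|x|^2$ are strictly admissible, have uniformly smooth data, and have tangential trace $(n-1)\varepsilon$ at $e_n$.

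The CNS/Trudinger argument obtains a lower bound only by using $f\ge c_0$ in its last step. Without that, the support-function barrier controls $u_{\nu\nu}$ only at contact points where $u_\nu$ has a controlled $C^{1,1}$ upper support, such as approximate minimum points of $\sigma_2^{\nu\nu}$. Everywhere else it yields only the ratio bound $f-\sigma_2[(u_{\alpha\beta})]\le C\sigma_2^{\nu\nu}$ of Section 2.1.

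Semiconvexity does not rule out your limiting configuration either. The convex solutions of Proposition~\ref{prop4.counterexample} have $\sigma_3\ge0$, tangential block tending to $0$, and $u_{\nu\nu}\to\infty$ at $e_n$, all with boundary data bounded in $C^{3,\alpha}$. So your step 3 (tangential block nearly zero) is consistent, not contradictory. Any correct argument must use $\|\varphi\|_{C^{3,1}}$ quantitatively.

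\textbf{What is actually needed.} The boundary equation reads $u_{\nu\nu}\sigma_2^{\nu\nu}=f-\sigma_2[(u_{\alpha\beta})]+\sum_\alpha u_{\alpha\nu}^2$. So the missing idea is a bound on $\sum_\alpha u_{\alpha\nu}^2/\sigma_2^{\nu\nu}$ that stays uniform as $\sigma_2^{\nu\nu}\to0$; that is, the mixed derivatives must vanish like $\sqrt{\sigma_2^{\nu\nu}}$. The paper does this in four steps.
\begin{enumerate}
\item It writes $\sigma_2^{\nu\nu}=\tau q$ with $q=m-u_\nu$.
\item It proves $f+|(u_{\alpha\beta})|^2+|C_0|^2+|p_0|^2\le Cq$ on $\partial\Omega$.
\item It runs a tangential trace comparison (Lemma~\ref{lem3.trace}) and a maximum principle for $|W|^2/Z$, $W=(Y-PDu)/d$, in a boundary strip. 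Semiconvexity enters here as the spectral control $|(\sigma_2^{ij})D^2u|\le C\Delta u$ of error terms, not as a pointwise contradiction.
\item These comparisons depend on $\sup_{\{d=T\}}\sigma_2^{\nu\nu}$, so the argument closes only with a weak interior estimate $\sup_{\{d\ge\delta\}}\Delta u\le\varepsilon\sup_{\partial\Omega}u_{\nu\nu}+C_{\varepsilon,\delta}$.
\end{enumerate}
Your plan has no counterpart to either the strip comparisons or the interior closing step.

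\textbf{Smaller points.}
\begin{itemize}
\item ITW's argument needs $f^{1/2}\in C^{1,1}$ for $k=2$, not $f\in C^{1,1}$.
\item The bound $|Df|^2\le Cf$ fails near $\partial\Omega$ when $f$ is only nonnegative on $\overline\Omega$; take $f=d$ near the boundary. So your $u_{\xi\xi}$ maximum principle does not reduce to the boundary as written. The paper instead uses the reduction of \cite{JW}, which needs only $|Df|\le C$ and $\Delta f\ge-C$.
\item In your tangential--normal barrier, $(n-1)\Delta u\ge c>0$ is not available uniformly in the degenerate setting.
\end{itemize}
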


Theorem \ref{thm1.2} resolves the $k=2$ case of the open problem in \cite[Section 4.2]{ITW}.
Moreover, the boundary value assumption is sharp in the H\"older scale: for every $\alpha\in(0,1)$, a smooth strictly convex regularization of \cite[Example 1]{CNSDegenerate} has uniformly bounded $C^{3,\alpha}$ boundary values and $C^{1,1}$ right-hand sides, but unbounded Hessians.

We also obtain a semiconvexity estimate for other $k$-Hessian equations on the unit ball, by adapting Trudinger's tangential trace estimate \cite{TrudingerBalls} to the optimal condition on the right-hand side.
For $k=n$, admissibility already implies convexity.

\begin{theorem}\label{thm1.3}
Let $n\geq4$ and $3\leq k\leq n-1$, let $\varphi\in C^{3}(\partial B_{1})$, and let $f>0$ in $B_{1}$ with $f^{1/(k-1)}\in C^{1,1}(\overline{B}_{1})$.
Let $u\in C^{3,1}(\overline{B}_{1})$ be a $k$-admissible solution of \eqref{eqn1.1} with $\Omega=B_{1}$.
Then
\[
D^{2}u\geq-CI\quad\text{on }\overline{B}_{1},
\]
where $C$ depends only on $n$, $k$, $\|\varphi\|_{C^{3}(\partial B_{1})}$, and $\|f^{1/(k-1)}\|_{C^{1,1}(\overline{B}_{1})}$, but is independent of $\inf_{B_{1}}f$.
\end{theorem}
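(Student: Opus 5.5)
We plan to prove Theorem \ref{thm1.3} through a maximum principle argument for the minimal eigenvalue of $D^{2}u$. The strategy has two parts. First, we bound from below the second derivatives on $\partial B_{1}$ in all directions. Second, we show that an interior minimum of $\lambda_{\min}(D^{2}u)$ below $-C$ is impossible, using the concavity of $F=\sigma_{k}^{1/(k-1)}$-type operators and the assumption $f^{1/(k-1)}\in C^{1,1}$.

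\textbf{Step 1: boundary lower bound.} Write $u=\varphi$ on $\partial B_{1}$ and extend $\varphi$ as a homogeneous-degree-zero function near the sphere. Tangential second derivatives are then $u_{\alpha\beta}=\varphi_{\alpha\beta}-u_{\nu}\delta_{\alpha\beta}$ in an orthonormal tangent frame (curvature $1$). Hence they are bounded by $\|\varphi\|_{C^{2}}$ and $|Du|$. The gradient bound itself comes from standard barriers, since $B_{1}$ is uniformly convex and $\sigma_{1}[D^{2}u]>0$ gives a subharmonic upper barrier and the harmonic extension.

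Mixed derivatives $u_{\alpha\nu}$ are handled by Trudinger's angular derivative trick on balls \cite{TrudingerBalls}. The rotation vector fields $T=x_{i}\partial_{j}-x_{j}\partial_{i}$ commute with the equation up to the right-hand side, so $L(Tu)=T(f^{1/(k-1)})$ for the linearized operator of $\sigma_{k}^{1/(k-1)}$. We must check that this term is controlled without $\inf f$. It is, because $|T f^{1/(k-1)}|\le C$ while $\mathcal{F}=\sum F^{ii}\ge c$ for $F=\sigma_{k}^{1/(k-1)}$ near degeneracy; this degree is at most $1$-homogeneous relative to $k$, and the inequality $\sum F^{ii}\ge c$ holds since $F$ has degree $k/(k-1)>1$. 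The barrier $A(1-|x|^{2})$ then bounds $|D(Tu)|$ at the boundary.

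For the double normal direction only a lower bound is needed. Since $u_{\nu\nu}=\sigma_{1}-\sum u_{\alpha\alpha}$, and $\sigma_{1}>0$, the bound $u_{\nu\nu}\ge-C$ follows from the tangential bounds. Every unit vector then gives $u_{\xi\xi}\ge-C$ on $\partial B_{1}$.

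\textbf{Step 2: interior.} Take $G=F(D^{2}u)=\sigma_{k}^{1/(k-1)}[D^{2}u]=g:=f^{1/(k-1)}$. Differentiate twice in a fixed direction $\xi$ and use concavity of $G$ on $\Gamma_{k}$, which holds because degree $k/(k-1)$ exceeds $1$. The map $\sigma_{k}^{1/(k-1)}$ is not concave in general, but $\sigma_{k}/\sigma_{k-1}$ and $\sigma_{k}^{1/k}$ are. So we instead write $\sigma_{k}^{1/(k-1)}=(\sigma_{k}^{1/k})^{k/(k-1)}$ and exploit the extra convexity term. This term is the main point where the optimal exponent is used, as in \cite{ITW,Dong}: $\Delta_{\xi\xi}$ of $g^{k-1}$ produces $(k-1)(k-2)g^{k-3}g_{\xi}^{2}$, which is nonnegative, while the dangerous term $(k-1)g^{k-2}g_{\xi\xi}$ is controlled.

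Consider $w=-u_{\xi\xi}$ at its maximum over $\xi$ and $x$, and assume it is large and interior. At that point $\lambda_{\min}=-w$. Let $\lambda_{n}=-w$ and write $\sigma_{k-1;n}\ge c\,\sigma_{k-1}$. Using $\sigma_{k}\ge0$ with a large negative eigenvalue, the positive eigenvalues satisfy $\sigma_{k-1}(\lambda')\gtrsim w\,\sigma_{k-2}(\lambda')$. This gives $F^{nn}$ comparable to $\mathcal{F}$, and allows us to absorb $g_{\xi\xi}$ via $\sigma_{k-1}^{-1}$-weighted terms.

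\textbf{Main obstacle.} We expect the main obstacle to be this interior step. Specifically, we need to show that the linearized operator applied to $-u_{\xi\xi}+\frac{A}{2}|x|^{2}$ is strictly positive without $\inf f$. Our first attempt is the test function $-u_{\xi\xi}+A|x|^{2}$ together with the inequality
\[
-F^{ij,rs}u_{ij\xi}u_{rs\xi}\ge \frac{c}{w}\sum_{i} F^{ii}u_{ii\xi}^{2}-C\frac{g_{\xi}^{2}}{g}.
\]
Combining this with the Newton–Maclaurin bound $g_{\xi}^{2}\le Cg$, valid for nonnegative $C^{1,1}$ functions, should close the argument.
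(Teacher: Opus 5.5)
\textbf{There is a genuine gap in Step 2.} Fix $\xi$ and suppose $h=-u_{\xi\xi}+A|x|^{2}$ has an interior maximum. Differentiating $F(D^{2}u)=g$ twice gives
\[
F^{ij}h_{ij}=-g_{\xi\xi}+F^{ij,rs}u_{ij\xi}u_{rs\xi}+2A\sum_{i}F^{ii}.
\]
To reach a contradiction you need a \emph{lower} bound for $F^{ij,rs}u_{ij\xi}u_{rs\xi}$.

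Write $F=(\sigma_{k}^{1/k})^{k/(k-1)}$. Concavity of $\sigma_{k}^{1/k}$ then gives only the \emph{upper} bound $F^{ij,rs}u_{ij\xi}u_{rs\xi}\leq g_{\xi}^{2}/(kg)$. The third derivatives are constrained only by $F^{ij}u_{ij\xi}=g_{\xi}$, so nothing in your argument keeps this form from being arbitrarily negative.

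Your displayed inequality bounds $-F^{ij,rs}u_{ij\xi}u_{rs\xi}$ from below, which is exactly the useless direction. For the same reason, in the $\sigma_k$ formulation the term $(k-1)(k-2)g^{k-3}g_{\xi}^{2}$ from $f_{\xi\xi}$ enters with a minus sign, so it hurts rather than helps. A maximum principle for $-u_{\xi\xi}$ (or $-\lambda_{\min}$) is the wrong-way estimate for a concave operator. This is why the paper obtains semiconvexity only indirectly: for $k=2$ through a lower bound on $\sigma_3$ that needs a delicate algebraic lemma, and on the ball through the tangential trace.

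\textbf{The missing idea.} Bound from \emph{above} a quantity for which concavity has the favourable sign, then convert this into a lower eigenvalue bound through the cone. The paper takes $v=\Delta u-x_{a}x_{b}u_{ab}$, which comes from the rotation fields that commute with the equation. Its linearization is
\[
\sigma_{k}^{ij}v_{ij}=(\delta_{ab}-x_{a}x_{b})\bigl(f_{ab}-\sigma_{k}^{ij,st}u_{ija}u_{stb}\bigr)-4x_{a}f_{a}-2kf,
\]
with a positive semidefinite weight. Concavity therefore gives $\sigma_{k}^{ij}v_{ij}\geq-Cg^{k-2}$. Combined with $\sum_{i}\sigma_{k}^{ii}\geq c(\Delta u)^{1/(k-1)}g^{k-2}$, this rules out a large interior maximum of $v+|x|^{2}$. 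On $\partial B_{1}$ one has $v=\Delta_{\partial B_{1}}\varphi+(n-1)x\cdot Du$, which needs only the $C^{1}$ bound, so no mixed derivatives are required. Finally, $D^{2}u|_{x^{\perp}}\in\overline{\Gamma}_{k-1}\subset\overline{\Gamma}_{2}$, so its trace controls its norm. Min--max then gives $\lambda_{2}\leq C$, and $\sum_{i\geq2}\lambda_{i}\geq0$ yields $\lambda_{n}\geq-(n-2)C$.

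\textbf{Two further claims are false.}
\begin{itemize}
\item The bound $g_{\xi}^{2}\leq Cg$ in every direction fails at $\partial B_{1}$, because $g\geq0$ is known only on $\overline{B}_{1}$. For example, $g=1-x_{n}$ satisfies the hypotheses with $g(e_{n})=0$ and $|g_{n}|=1$. Only the weighted bound $(\delta_{ab}-x_{a}x_{b})g_{a}g_{b}\leq Cg$ holds. Its weight is exactly the one the rotation fields produce, which is another reason the tangential trace, not a fixed direction, is the right quantity.
\item $\sum_{i}F^{ii}\geq c$ fails for $F=\sigma_{k}^{1/(k-1)}$: for $u=\frac{\varepsilon}{2}|x|^{2}$ it is of order $\varepsilon^{1/(k-1)}$. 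One only has a lower bound proportional to $(\Delta u)^{1/(k-1)}$. Also, a positive function homogeneous of degree $k/(k-1)>1$ is never concave.
\end{itemize}
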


The rest of the paper is organized as follows.
Section 2 proves Theorem \ref{thm1.1}.
Section 3 establishes the global estimate and Theorem \ref{thm1.2}.
Section 4 gives the boundary counterexample.
Section 5 proves Theorem \ref{thm1.3}.

\section{Global semiconvexity of \texorpdfstring{$2$}{2}-admissible solutions}

Throughout this section, $k=2$.
To prove Theorem \ref{thm1.1}, we first bound $\sigma_{3}[D^{2}u]$ from below on the boundary and then throughout the domain.
All constants have the dependencies stated in that theorem and are independent of $\inf_{\Omega}f$ and of second and higher derivatives of $u$.
Since $f>0$ on $\overline{\Omega}$, continuity gives $D^{2}u\in\Gamma_{2}$ there.

The matrix derivatives of the Hessian operators are
\begin{align*}
\sigma_{k}^{ij}[A]&:=\frac{\partial\sigma_{k}[A]}{\partial A_{ij}},\quad
\sigma_{k}^{ij,st}[A]:=\frac{\partial^{2}\sigma_{k}[A]}{\partial A_{ij}\partial A_{st}}.
\end{align*}
Unless an argument is displayed, they are evaluated at $D^{2}u$.
Repeated indices are summed in contractions; identities for matrix entries are componentwise.
Matrix norms are Frobenius norms.

\subsection{The boundary lower bound}

It is clear that 
\begin{equation}\label{eqn2.elementary}
\sigma_{2}^{ij}=(\Delta u)\delta_{ij}-u_{ij},\quad |D^{2}u|^{2}<(\Delta u)^{2},\quad \sum_{i=1}^{n}\sigma_{2}^{ii}=(n-1)\Delta u.
\end{equation}
On strictly mean convex domains, the estimates in \cite[Proposition 2.1, (2.2), and (4.2)--(4.3)]{JW} give
\begin{equation}\label{eqn2.first}
\|u\|_{C^{1}(\overline{\Omega})}\leq C.
\end{equation}
In an orthonormal frame with last vector the inward unit normal $\nu$, we write $\sigma_{2}^{\nu\nu}=\sigma_{2}^{ij}\nu_{i}\nu_{j}$.
The same estimates give
\begin{equation}\label{eqn2.boundaryblocks}
|u_{\alpha\beta}|+|u_{\alpha\nu}|\leq C\ \ (\alpha,\beta<n),\quad
\sum_{\alpha=1}^{n-1}u_{\alpha\alpha}=\sigma_{2}^{\nu\nu}>0.
\end{equation}
For $k=2$, their condition on the right-hand side is $|Df|\leq C$, so $f\in C^{1,1}(\overline{\Omega})$ suffices without an exterior extension.
The boundary equation is
\begin{align*}
f&=u_{\nu\nu}\sum_{\alpha=1}^{n-1}u_{\alpha\alpha}
+\sigma_{2}[(u_{\alpha\beta})]-\sum_{\alpha=1}^{n-1}u_{\alpha\nu}^{2}.
\end{align*}
Greek indices in boundary formulas range over the tangential directions.

At a boundary point, choose graph coordinates with
\begin{align*}
\partial\Omega&=\{x_{n}=\rho(x')\},\quad \Omega=\{x_{n}>\rho(x')\},\quad D'\rho(0)=0,
\end{align*}
and use the local barrier from \cite[Section 3]{LyuD},
\begin{align*}
V&=(x_{n}-\rho)^{2}-\beta_{1}(x_{n}-\rho)-\beta_{2}|x'|^{2},\quad
\Omega_{r}=\{|x'|<r,\ 0<x_{n}-\rho(x')<r^{2}\}.
\end{align*}
Strict mean convexity permits fixed small $\beta_{1},\beta_{2},r>0$ such that
\begin{equation}\label{eqn2.barrierinequality}
D^{2}V-cI\in\Gamma_{2},\quad \sigma_{2}^{ij}V_{ij}\geq c\Delta u,\quad
V\leq-c(x_{n}-\rho)-\beta_{2}|x'|^{2},\quad V_{n}\leq-c.
\end{equation}
The inequalities hold throughout $\Omega_{r}$, uniformly in a finite boundary atlas.
The inequality for $\sigma_{2}^{ij}V_{ij}$ follows from \eqref{eqn2.elementary} and $\operatorname{tr}A\geq|A|$ for $A\in\overline{\Gamma}_{2}$.

We first prove an estimate at a boundary contact point.
Suppose that $u_{n}(x',\rho(x'))$ admits a $C^{1,1}$ upper support $g$ on a fixed graph neighborhood; that is,
\[
u_{n}(x',\rho(x'))\leq g(x'),\quad u_{n}(0)=g(0).
\]
The Taylor remainder of $g$ and \eqref{eqn2.first} give $K=C(1+\|g\|_{C^{1,1}})$ such that
\begin{align*}
u_{n}-g(0)-D'g(0)\cdot x'+KV&\leq0\quad\text{on }\partial\Omega_{r}.
\end{align*}
At a positive interior maximum, the normal derivative condition, \eqref{eqn2.elementary}, and \eqref{eqn2.barrierinequality} give
\begin{align*}
\Delta u&\geq|u_{nn}|=-KV_{n}\geq cK,\\
0&\geq f_{n}+K\sigma_{2}^{ij}V_{ij}\geq-\|Df\|_{\infty}+cK\Delta u>0,
\end{align*}
provided the fixed constant in $K$ is sufficiently large.
Thus the comparison holds in $\Omega_{r}$, and inward differentiation at the origin yields
\begin{equation}\label{eqn2.supportcontact}
u_{nn}(0)\leq C(1+\|g\|_{C^{1,1}}).
\end{equation}
Here $C$ also depends on the size of the fixed neighborhood.

We use \eqref{eqn2.supportcontact} to bound
\[
Z=\max_{\partial\Omega}
\frac{f-\sigma_{2}[(u_{\alpha\beta})]}{\sigma_{2}^{\nu\nu}}.
\]
It suffices to consider $Z$ above a fixed positive threshold.
Center the graph chart at a point where this maximum is attained, and write
$\widetilde\varphi(x')=\varphi(x',\rho(x'))$ and
$G=I_{n-1}+D'\rho(D'\rho)^{T}$.
Differentiating the boundary condition gives, in the orthonormal tangent frame induced by the graph,
\[
(u_{\alpha\beta})
=G^{-1/2}(D'^{2}\widetilde\varphi-u_{n}D'^{2}\rho)G^{-1/2}.
\]
Here $u_{n}$ and $f$ are evaluated at $(x',\rho(x'))$.

Strict mean convexity gives $\operatorname{tr}(G^{-1}D'^{2}\rho)\geq h_{0}$.
Define $g$ implicitly by
\begin{align*}
\operatorname{tr}\bigl[G^{-1}(D'^{2}\widetilde\varphi-gD'^{2}\rho)\bigr]
+\frac{\sigma_{2}\bigl[G^{-1}(D'^{2}\widetilde\varphi-gD'^{2}\rho)\bigr]-f}{Z}&=0,
\quad g(0)=u_{n}(0).
\end{align*}
The matrices in this formula are similar to symmetric matrices, and $\sigma_{2}$ is evaluated on their eigenvalues.
Replacing $g$ by $s$, the left-hand side has uniformly bounded $C^{1,1}$ norm in $(x',s)$ and $s$-derivative at most $-h_{0}/2$ for $|s|\leq\|Du\|_{\infty}+1$ and large $Z$.
The implicit function theorem gives $g$ on a neighborhood of uniform size, with $\|g\|_{C^{1,1}}\leq C$.
The left-hand side is nonnegative at $s=u_{n}$ by the definition of $Z$, so monotonicity gives $u_{n}\leq g$.
At this contact point, \eqref{eqn2.supportcontact} and the boundary equation give $Z\leq u_{nn}(0)\leq C$.
Consequently, the boundary equation and \eqref{eqn2.boundaryblocks} give
\begin{align*}
\sigma_{2}[(u_{\alpha\beta})]\geq f-C\sum_{\alpha=1}^{n-1}u_{\alpha\alpha},\quad
u_{\nu\nu}\sum_{\alpha=1}^{n-1}u_{\alpha\alpha}\leq C.
\end{align*}
Expanding the third-order principal minors and using the bounded tangential and mixed derivatives gives
\begin{align*}
\sigma_{3}[D^{2}u]&\geq u_{\nu\nu}\sigma_{2}[(u_{\alpha\beta})]-C.
\end{align*}
If $u_{\nu\nu}\geq0$, then
\begin{align*}
u_{\nu\nu}\sigma_{2}[(u_{\alpha\beta})]
&\geq-Cu_{\nu\nu}\sum_{\alpha=1}^{n-1}u_{\alpha\alpha}\geq-C.
\end{align*}
If $u_{\nu\nu}<0$, it is bounded because $\Delta u>0$ implies $u_{\nu\nu}>-\sum_{\alpha=1}^{n-1}u_{\alpha\alpha}\geq-C$.
Thus
\begin{equation}\label{eqn2.boundarysigma3}
\sigma_{3}[D^{2}u]\geq-C\quad\text{on }\partial\Omega.
\end{equation}

\subsection{The global lower bound}

For distinct indices $i_{1},\ldots,i_{r}$, write
\[
\sigma_{m;i_{1}\cdots i_{r}}(\lambda)
:=\left.\sigma_{m}(\lambda)\right|_{\lambda_{i_{1}}=\cdots=\lambda_{i_{r}}=0}.
\]
We set $\sigma_{0}=1$, set $\sigma_{m}=0$ outside its degree range, and set the deleted-component expression to zero if an index is repeated.
At $A=\operatorname{diag}\lambda$, for every symmetric matrix $T$,
\begin{align*}
\sigma_{k}^{ij}[A]=\sigma_{k-1;i}(\lambda)\delta_{ij},\quad
\sigma_{k}^{ij,st}[A]T_{ij}T_{st}=\sum_{i,j=1}^{n}\sigma_{k-2;ij}(\lambda)
\left(T_{ii}T_{jj}-T_{ij}^{2}\right).
\end{align*}

Set $q=-\sigma_{3}[D^{2}u]$.
Since $u\in C^{3,1}$ and $f\in C^{1,1}$, the differentiated identities hold almost everywhere and $q\in C^{1,1}$.
At such a point, diagonalize $D^{2}u$ with eigenvalues $\lambda=(\lambda_{1},\ldots,\lambda_{n})$.
In the algebraic formulas below, the argument $\lambda$ is omitted from $\sigma_{j}$ and $\sigma_{j;i}$.
In this frame, $\sigma_{2}^{ii}=\sigma_{1;i}>0$ and $\sigma_{3}^{ii}=\sigma_{2;i}$, and we have 
\[
\sum_{i=1}^{n}\sigma_{1;i}\lambda_{i}^{2}=\sigma_{2}\sigma_{1}+3q,
\]
which yields
\begin{equation}\label{eqn2.newton}
q=\frac{\sigma_{1}^{3}}{3}-\sigma_{2}\sigma_{1}
-\frac{1}{3}\sum_{i=1}^{n}\lambda_{i}^{3}\leq\frac{2}{3}\sigma_{1}^{3}.
\end{equation}
For each $\ell=1,\ldots,n$, twice differentiating the equation gives
\begin{align*}
\sigma_{2}^{ij}u_{ij\ell\ell}
=f_{\ell\ell}-\sigma_{2}^{ij,st}u_{ij\ell}u_{st\ell}=f_{\ell\ell}-(\Delta u_{\ell})^{2}
+\sum_{i,j=1}^{n}u_{ij\ell}^{2}.
\end{align*}
For every symmetric matrix $P$, we have
\begin{align*}
\sigma_{3}^{ij,st}P_{ij}P_{st}
&=(\Delta u)\bigl[(\operatorname{tr}P)^{2}-|P|^{2}\bigr]
-2(\operatorname{tr}P)u_{ij}P_{ij}
+2u_{ij}P_{i\ell}P_{j\ell}.
\end{align*}
Combining these identities and grouping equal third derivatives gives
\begin{equation}\label{eqn2.decomposition}
\sigma_{2}^{ij}q_{ij}\geq-\sum_{i=1}^{n} \sigma_{2;i}f_{ii}+\sum_{\ell=1}^{n}Q_{\ell}((u_{ii\ell})_{i}),
\end{equation}
where
\begin{align*}
Q_{\ell}(X)&=(\sigma_{1;\ell}^{2}-\sigma_{2})X_{\ell}^{2}
+\sum_{i\neq\ell}(4\sigma_{1;\ell}\sigma_{1;i}-\sigma_{1;\ell}^{2}-3\sigma_{2})X_{i}^{2}\\
&\quad+(\sigma_{1;\ell}^{2}+\sigma_{2})\left(\sum_{i=1}^{n}X_{i}\right)^{2}
-2\sigma_{1;\ell}\left(\sum_{i=1}^{n}X_{i}\right)
\left(\sum_{i=1}^{n}\sigma_{1;i}X_{i}\right).
\end{align*}
The omitted terms have three distinct indices and coefficient
\begin{align*}
2\sigma_{2}+2\left(\sum_{p\notin\{i,j,k\}}\lambda_{p}\right)^{2}
+4\sum_{p\notin\{i,j,k\}}\lambda_{p}^{2}&\geq0.
\end{align*}
The following inequality controls the remaining third derivative terms.

\begin{lemma}\label{thm2.algebra}
Let $n\geq3$, let $\lambda\in\Gamma_{2}$, and set $q=-\sigma_{3}(\lambda)$.
There is a constant $C_{n}$, depending only on $n$, such that, if $q\geq C_{n}\sigma_{2}^{3/2}$, then, for every $\ell\in\{1,\dots,n\}$ and $X\in\mathbb{R}^{n}$,
\begin{equation}\label{eqn2.algebra}
Q_{\ell}(X)\geq-C_{n}\frac{\sigma_{1;\ell}}{q}
\left(\sum_{i=1}^{n}\sigma_{2;i}X_{i}\right)^{2}
-C_{n}\left(\sum_{i=1}^{n}\sigma_{1;i}X_{i}\right)^{2}.
\end{equation}
\end{lemma}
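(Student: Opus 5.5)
The plan is to first pin down the shape of $\lambda$ when $q$ is large relative to $\sigma_2^{3/2}$, and then to treat $Q_\ell$ as a quadratic form whose only large negative directions are the two directions penalized on the right-hand side of \eqref{eqn2.algebra}.

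\emph{Step 1: the spectrum.} Order $\lambda_1\geq\cdots\geq\lambda_n$. For $\lambda\in\Gamma_2$ at most one eigenvalue is negative. We then have $-\sigma_3\leq C\sigma_1^3$ by \eqref{eqn2.newton}. The hypothesis $q\geq C_n\sigma_2^{3/2}$ with $C_n$ large should force the configuration
\[
\lambda_n<0,\qquad |\lambda_n|\approx\lambda_{1}\approx\cdots\text{(a few large ones)},\qquad \sigma_2\ll\sigma_1^2 .
\]
The point is that $q$ large forces $\lambda_n$ strongly negative. Since $\sigma_{1;n}>0$, the sum of the remaining eigenvalues dominates $|\lambda_n|$. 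Meanwhile $\sigma_2=\lambda_n\sigma_{1;n}+\sigma_{2;n}$ is small relative to $\sigma_{1;n}^2$, so $\sigma_{2;n}\approx|\lambda_n|\sigma_{1;n}$. From this I would extract the quantitative facts needed later:
\[
\sigma_{1;n}\geq c\,\sigma_1,\qquad \sigma_{2;i}\geq c\,q/\sigma_{1;i}\ \ (i<n),\qquad \sigma_{2;n}\approx q/|\lambda_n|,
\]
and, crucially, $\sigma_2\leq C_n^{-2/3}q^{2/3}$ is negligible against $\sigma_{1;i}\sigma_{1;j}$ for the relevant pairs. I would derive these using $q=-\lambda_n\sigma_{2;n}-\sigma_{3;n}$ and the identity $\sigma_{2;i}=\sigma_2-\lambda_i\sigma_{1;i}$.

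\emph{Step 2: reduce the quadratic form.} Set $S=\sum X_i$, $T=\sum\sigma_{1;i}X_i$, $W=\sum\sigma_{2;i}X_i$. In $Q_\ell$ I would complete the square in $S$:
\[
(\sigma_{1;\ell}^2+\sigma_2)S^2-2\sigma_{1;\ell}ST\geq -\frac{\sigma_{1;\ell}^2}{\sigma_{1;\ell}^2+\sigma_2}T^2\geq -T^2 .
\]
This is absorbed by the $C_nT^2$ term, so it remains to bound the diagonal part
\[
D_\ell(X)=(\sigma_{1;\ell}^2-\sigma_2)X_\ell^2+\sum_{i\neq\ell}(4\sigma_{1;\ell}\sigma_{1;i}-\sigma_{1;\ell}^2-3\sigma_2)X_i^2 ,
\]
or rather its negative part. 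Its coefficients can be negative only when $\sigma_{1;\ell}>4\sigma_{1;i}$ roughly, that is, for indices $i$ with comparatively small $\sigma_{1;i}$, which means large $\lambda_i$. I would keep a portion of the $S^2$ term instead of discarding it entirely, leaving a positive combination of $S^2$, $T^2$ and $W^2$ to spend.

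\emph{Step 3: the main obstacle.} The hard step is showing that the negative directions of $D_\ell$ are controlled by $T^2$ and $\frac{\sigma_{1;\ell}}{q}W^2$. The large eigenvalues correspond to indices with small $\sigma_{1;i}$. For those indices $\sigma_{2;i}$ is large, by Step 1 roughly $q/\sigma_{1;i}$. Hence $\frac{\sigma_{1;\ell}}{q}\sigma_{2;i}^2\gtrsim\sigma_{1;\ell}/\sigma_{1;i}\cdot q/\sigma_{1;i}$, which should dominate $\sigma_{1;\ell}^2$ in the regime $q\gg\sigma_2^{3/2}$. The difficulty is that $W$ is a single linear combination, not a sum of squares, so cancellations among several bad indices must be excluded. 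To handle this, I would split the indices into the negative index $n$, the large ones $\mathcal B$ (where $\sigma_{1;i}\leq\sigma_{1;\ell}/4$), and the rest $\mathcal G$. On $\mathcal G$, $D_\ell$ is coercive and gives $\sum_{\mathcal G}\sigma_{1;\ell}\sigma_{1;i}X_i^2$. Then I would express $\sum_{\mathcal B}X_i$ using $S$, $T$, $W$ together with the $\mathcal G$-variables, using that on $\mathcal B$ the vectors $(1,\sigma_{1;i},\sigma_{2;i})$ are nearly proportional to $(1,\sigma_{1;\mathcal B},\sigma_{2;\mathcal B})$ with controlled errors, and apply Cauchy–Schwarz. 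If several large eigenvalues make this degenerate, I would first prove the case where $\mathcal B$ is a single index (the generic case $\lambda\approx(a,\dots,-a')$) and then handle multiple indices by noting that $\sigma_2$ small forces $|\mathcal B|$ bounded with comparable entries, which makes their contributions to $D_\ell$ positive in the sum. Finally I would choose $C_n$ large enough for all the Step 1 asymptotics to hold with fixed margins.
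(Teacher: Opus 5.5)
\textbf{There is a genuine gap: your picture of the spectrum in Step~1 is false, and Steps~2--3 fail in a regime the hypothesis allows.} The hypothesis $q\geq C_n\sigma_2^{3/2}$ makes $q$ large only relative to $\sigma_2^{3/2}$. It does not make $q$ comparable to $\sigma_1^3$, and it allows one eigenvalue to carry almost all of $\sigma_1$. For example, take $n=3$ and $\lambda=(1,\eta,-\eta+2\eta^{2})$ with $\eta\downarrow0$. Then $\sigma_2=\eta^2+2\eta^3$ and $q=\eta^2-2\eta^3$, so $q/\sigma_2^{3/2}\approx\eta^{-1}\to\infty$, yet $|\lambda_3|\approx\eta\ll\lambda_1$. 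Moreover $\sigma_{2;i}=\sigma_2-\lambda_i\sigma_{1;i}$ is negative at the large eigenvalues. Here $\sigma_{2;1}=\lambda_2\lambda_3\approx-\eta^2$ while $q/\sigma_{1;1}\approx 1/2$, so your bound $\sigma_{2;i}\geq cq/\sigma_{1;i}$ fails in both sign and size. Also, for $n\geq4$ several eigenvalues can be negative under the hypothesis, e.g.\ $\lambda=(1,2\eta+4\eta^{2},-\eta,-\eta)$. So ``at most one eigenvalue is negative'' is false, and the partition $\{n\}\cup\mathcal B\cup\mathcal G$ is not the right one.

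\textbf{The same example breaks Steps~2--3.} Take $\ell=2$, so $1\in\mathcal B$. The coefficient of $X_1^2$ in your $D_\ell$ is $4\sigma_{1;2}\sigma_{1;1}-\sigma_{1;2}^2-3\sigma_2=-1+O(\eta)$. For $X=e_1$, however, both penalties are tiny: $\sigma_{1;1}^2=4\eta^4$ and $\sigma_{1;2}\sigma_{2;1}^2/q\approx\eta^2$. The lemma survives only through an order-one cancellation with the full $S$-terms:
\[
Q_2(e_1)=2(\sigma_{1;2}\sigma_{1;1}-\sigma_2)=2\lambda_3^2 .
\]
Consequences:
\begin{itemize}
\item Completing the square in $S$ fails, even if you keep a fixed fraction $\theta<1$ of $(\sigma_{1;\ell}^2+\sigma_2)S^2$. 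It leaves $-(1-\theta)+O(\eta)$ in the direction $e_1$, which no multiple of the right-hand side can absorb.
\item The $\sigma_2$-penalty never dominates the bad diagonal coefficient. In this example $\sigma_{1;2}q^{-1}(\sum_i\sigma_{2;i}X_i)^2\approx(X_2-X_3+\eta X_1)^2$, which acts on the good variables, not on $X_1$.
\end{itemize}

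\textbf{What is missing} is an exact sum-of-squares representation of $Q_\ell$ that keeps this cancellation. The paper does this as follows.
\begin{itemize}
\item It eliminates $X_\ell$ through $T=\sum_i\sigma_{1;i}X_i$ instead of completing a square in $S$.
\item It normalizes $\alpha=(\lambda_i)_{i\neq\ell}/\sigma_{1;\ell}$ and writes $Q_\ell/\sigma_{1;\ell}^2$ in terms of the matrix $W=\operatorname{diag}(z)-\alpha z^{T}-z\alpha^{T}+(\sum_i z_i)\alpha\alpha^{T}$. This is the paper's matrix, not your scalar $W$; its squared norm is the positive part.
\item It uses the exact identity $B_0+\delta\sum_i z_i=\frac13\langle\alpha\alpha^{T}-\operatorname{diag}\alpha-hI,W\rangle$, together with $\delta\geq q/(2\sigma_{1;\ell}^3)$ and $t^2/\delta\leq[4n(n-1)]^{-1}$. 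These two bounds are where the hypothesis enters.
\item This lets $|W|^2$ control $\sum_i z_i$ at the cost $\delta^{-1}B_0^2$, which is what produces the $\sigma_{1;\ell}q^{-1}(\sum_i\sigma_{2;i}X_i)^2$ term.
\end{itemize}
Your example sits exactly at the degenerate point $\alpha\to e_1$, $\delta\to0$ of that argument. The easy cases, where $\lambda_\ell$ is largest with $\sigma_2\leq\sigma_{1;\ell}^2$, or where $\sigma_2\geq\sigma_{1;\ell}^2$, are handled separately there.
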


\begin{proof}
Choose $C_{n}$ with $C_{n}\geq8n-2$ and $4C_{n}^{-4/3}\leq[8n(n-1)]^{-1}$.
If $\lambda_{\ell}$ is a largest component and $\sigma_{2}\leq \sigma_{1;\ell}^{2}$, then $\sigma_{1;i}\geq \sigma_{1;\ell}$ for $i\neq\ell$.
The diagonal coefficients in $Q_{\ell}$ are nonnegative, and completing the square in $\sum_{i=1}^{n}X_{i}$ gives $Q_{\ell}\geq-\left(\sum_{i=1}^{n} \sigma_{1;i}X_{i}\right)^{2}$.

For the remaining cases, delete the $\ell$th component, reindex the coordinates from $1$ to $n-1$, and set
\begin{align*}
\alpha&=\frac{(\lambda_{i})_{i\neq\ell}}{\sigma_{1;\ell}},\quad z=(X_{i})_{i\neq\ell},\quad
h=\frac{1-|\alpha|^{2}}{2},\quad t=\frac{\sigma_{2}}{\sigma_{1;\ell}^{2}},
\end{align*}
and
\begin{align*}
W&=\operatorname{diag}(z)-\alpha z^{T}-z\alpha^{T}+\left(\sum_{i=1}^{n-1}z_{i}\right)\alpha\alpha^{T}.
\end{align*}
Then $\sum_{i=1}^{n-1}\alpha_{i}=1$, $\lambda_{\ell}/\sigma_{1;\ell}=t-h$, and
\begin{align*}
X_{\ell}&=\frac{\sum_{i=1}^{n} \sigma_{1;i}X_{i}}{\sigma_{1;\ell}}+\alpha\cdot z-(1+t-h)\sum_{i=1}^{n-1}z_{i}.
\end{align*}
Since $\operatorname{tr}W=(1+|\alpha|^{2})\left(\sum_{i=1}^{n-1}z_{i}\right)-2\alpha\cdot z$, expansion gives
\begin{align*}
\frac{Q_{\ell}(X)}{\sigma_{1;\ell}^{2}}
&=2\sum_{i=1}^{n-1}(1+|\alpha|^{2}-2\alpha_{i})z_{i}^{2}
+(t-1)\left|z-\left(\sum_{i=1}^{n-1}z_{i}\right)\alpha\right|^{2}\\
&\quad+\frac{1}{2}(\operatorname{tr}W)^{2}
-\frac{\sum_{i=1}^{n} \sigma_{1;i}X_{i}}{\sigma_{1;\ell}}\operatorname{tr}W.
\end{align*}
The coefficients $1+|\alpha|^{2}-2\alpha_{i}$ are nonnegative.
Thus $t\geq1$ gives $Q_{\ell}\geq-\left(\sum_{i=1}^{n} \sigma_{1;i}X_{i}\right)^{2}/2$.

It remains to consider $t<1$ when $\lambda_{\ell}$ is not a largest component.
Then $\sigma_{1;\ell}\geq \sigma_{1}/4$: if $\lambda_{\ell}>0$, a second component is no smaller, so $2\lambda_{\ell}^{2}\leq|\lambda|^{2}<\sigma_{1}^{2}$; otherwise $\sigma_{1;\ell}\geq \sigma_{1}$.
Put
\begin{align*}
\delta&=\frac{h^{2}}{3}+\frac{1}{6}|\alpha\alpha^{T}-\operatorname{diag}\alpha|^{2}.
\end{align*}
Expanding $q$ and using $q\leq2\sigma_{1}^{3}/3$ give
\begin{align*}
\frac{q}{\sigma_{1;\ell}^{3}}=\delta-th\leq\delta+t\sqrt{3\delta},\quad
\frac{\sigma_{1;\ell}^{3}t^{2}}{q}=\frac{\sigma_{2}^{2}}{\sigma_{1;\ell}q}
\leq\frac{4\sigma_{2}^{2}}{q^{4/3}}\leq\frac{1}{8n(n-1)}.
\end{align*}
If $\delta<q/(2\sigma_{1;\ell}^{3})$, these inequalities would imply $1<1/2+\sqrt{3/[16n(n-1)]}<1$.
Hence
\begin{align*}
\delta&\geq\frac{q}{2\sigma_{1;\ell}^{3}}>0,\quad
\frac{t^{2}}{\delta}\leq\frac{1}{4n(n-1)}.
\end{align*}

Set
\begin{align*}
B_{0}&=\frac{\sum_{i=1}^{n} \sigma_{2;i}X_{i}}{\sigma_{1;\ell}^{2}}-\frac{t}{2}\operatorname{tr}W-\frac{h\left(\sum_{i=1}^{n} \sigma_{1;i}X_{i}\right)}{\sigma_{1;\ell}}.
\end{align*}
Expansion gives
\begin{align*}
\frac{Q_{\ell}(X)}{\sigma_{1;\ell}^{2}}
&=|W|^{2}-2B_{0}\sum_{i=1}^{n-1}z_{i}
+t\left|z-\left(\sum_{i=1}^{n-1}z_{i}\right)\alpha\right|^{2}
-\frac{\sum_{i=1}^{n} \sigma_{1;i}X_{i}}{\sigma_{1;\ell}}\operatorname{tr}W,\\
B_{0}+\delta\sum_{i=1}^{n-1}z_{i}
&=\frac{1}{3}\langle\alpha\alpha^{T}-\operatorname{diag}\alpha-hI,W\rangle.
\end{align*}
Moreover,
\begin{align*}
\frac{1}{9}|\alpha\alpha^{T}-\operatorname{diag}\alpha-hI|^{2}
&=\frac{2}{3}\delta+\frac{n+1}{9}h^{2}\leq\frac{n+3}{3}\delta.
\end{align*}
The Cauchy--Schwarz inequality and completion of the square give
\begin{align*}
|W|^{2}-2B_{0}\left(\sum_{i=1}^{n-1}z_{i}\right)&\geq\frac{1}{2}|W|^{2}-\frac{2n}{3\delta}B_{0}^{2}.
\end{align*}
Since $(\operatorname{tr}W)^{2}\leq(n-1)|W|^{2}$ and $h^{2}\leq3\delta$,
\begin{align*}
B_{0}^{2}&\leq\frac{3}{\sigma_{1;\ell}^{4}}\left(\sum_{i=1}^{n} \sigma_{2;i}X_{i}\right)^{2}
+\frac{3}{4}(n-1)t^{2}|W|^{2}
+\frac{9\delta}{\sigma_{1;\ell}^{2}}
\left(\sum_{i=1}^{n}\sigma_{1;i}X_{i}\right)^{2},\\
\left|\frac{\sum_{i=1}^{n} \sigma_{1;i}X_{i}}{\sigma_{1;\ell}}\operatorname{tr}W\right|
&\leq\frac{1}{8}|W|^{2}+\frac{2(n-1)}{\sigma_{1;\ell}^{2}}\left(\sum_{i=1}^{n} \sigma_{1;i}X_{i}\right)^{2}.
\end{align*}
Substitution, using the bound for $t^{2}/\delta$, yields
\begin{align*}
\frac{Q_{\ell}(X)}{\sigma_{1;\ell}^{2}}
&\geq\frac{1}{4}|W|^{2}
-\frac{2n}{\delta \sigma_{1;\ell}^{4}}\left(\sum_{i=1}^{n} \sigma_{2;i}X_{i}\right)^{2}
-\frac{8n-2}{\sigma_{1;\ell}^{2}}\left(\sum_{i=1}^{n} \sigma_{1;i}X_{i}\right)^{2}.
\end{align*}
Since $\delta\geq q/(2\sigma_{1;\ell}^{3})$ and $C_{n}\geq8n-2\geq4n$, this proves \eqref{eqn2.algebra}.
\end{proof}

\begin{proof}[Proof of Theorem \ref{thm1.1}]
For $n=2$, admissibility implies convexity and $\sigma_{3}\equiv0$, so the conclusion is immediate.
We henceforth assume $n\geq3$.
Since $\sum_{i=1}^{n} \sigma_{1;i} u_{ii\ell}=f_{\ell}$ and $\sum_{i=1}^{n} \sigma_{2;i} u_{ii\ell}=-q_{\ell}$, \eqref{eqn2.decomposition} and \eqref{eqn2.algebra} imply
\begin{equation}\label{eqn2.qineq}
\sigma_{2}^{ij}q_{ij}\geq-\sum_{i=1}^{n}\sigma_{2;i}f_{ii}
-C_{n}|Df|^{2}-\frac{C_{n}}{q}\sigma_{2}^{ij}q_{i}q_{j}.
\end{equation}
This holds where $q\geq C_{n}f^{3/2}>0$.
By \eqref{eqn2.newton},
\begin{align*}
\sum_{i=1}^{n}|\sigma_{2;i}|&\leq nf+\sqrt{(n-1)\Delta u(f\Delta u+3q)}.
\end{align*}
Choose an integer $N\geq C_{n}+1$ to absorb the squared gradient term in \eqref{eqn2.qineq} and obtain
\begin{equation}\label{eqn2.powerinequality}
\sigma_{2}^{ij}(q^{N})_{ij}\geq-Cq^{N-1}(1+\Delta u+\sqrt{q\Delta u}).
\end{equation}

Translate coordinates so that $|x|\leq\operatorname{diam}(\Omega)$ and set
\begin{align*}
\kappa&=\frac{1}{4(1+\operatorname{diam}(\Omega)^{2})},\quad W=e^{\kappa|x|^{2}}q^{N}.
\end{align*}
Since $\sigma_{2}^{ij}x_{i}x_{j}\leq(n-1)\Delta u|x|^{2}$, \eqref{eqn2.powerinequality} gives
\begin{align*}
\sigma_{2}^{ij}W_{ij}-4\kappa \sigma_{2}^{ij}x_{j}W_{i}
&\geq W\left[\kappa(n-1)\Delta u
-C\left(\frac{1}{q}+\frac{\Delta u}{q}+\sqrt{\frac{\Delta u}{q}}\right)\right].
\end{align*}
For $q\geq1$, \eqref{eqn2.newton} implies
\begin{align*}
\frac{1}{q}+\frac{\Delta u}{q}+\sqrt{\frac{\Delta u}{q}}&\leq Cq^{-2/3}\Delta u.
\end{align*}
The bracket is positive for sufficiently large $q$.
Choose $Q$ above this threshold, $C_{n}\|f\|_{\infty}^{3/2}$, and the boundary bound in \eqref{eqn2.boundarysigma3}.
The set $\{q>Q\}$ is compactly contained in $\Omega$.
For each fixed solution, the coefficients are bounded and uniformly elliptic on its closure, and $W\in C^{1,1}$.
On its boundary, $q=Q$ and $W\leq e^{1/4}Q^{N}$.
Applying the weak maximum principle \cite[Theorem 9.1]{GT} to $W$ for the operator
$\sigma_{2}^{ij}\partial_{ij}-4\kappa\sigma_{2}^{ij}x_{j}\partial_{i}$, we obtain
\[
\sup_{\{q>Q\}}W\leq e^{1/4}Q^{N}.
\]
Since $q^{N}\leq W$ on $\{q>Q\}$ and $q\leq Q$ elsewhere, this gives
\[
\sup_{\Omega}q_{+}\leq e^{1/(4N)}Q\leq C.
\]
The ellipticity constants do not enter this bound.

If $\lambda_{\min}=-c<0$, then $c\leq \Delta u$ and \eqref{eqn2.newton} gives
\begin{align*}
(\Delta u)c^{2}&\leq\sum_{i=1}^{n}\sigma_{1;i}\lambda_{i}^{2}
=f\Delta u+3q\leq f\Delta u+3q_{+},\\
c^{3}&\leq fc+3q_{+}.
\end{align*}
Consequently,
\begin{align*}
\lambda_{\min}(D^{2}u)&\geq-\max\left\{\sqrt{2\|f\|_{\infty}},(6\|q_{+}\|_{\infty})^{1/3}\right\}.
\end{align*}

\end{proof}

The semiconvexity bound passes to locally uniform limits by the second-difference inequality on line segments in $\Omega$.

\section{Global \texorpdfstring{$C^{2}$}{C2} estimates}\label{sec3}

Throughout this section, $k=2$.
We establish the global $C^{2}$ estimate used to prove Theorem \ref{thm1.2}.
We first consider $u\in C^{4}(\overline{\Omega})$ with $D^{2}u\in\Gamma_{2}$ on $\overline{\Omega}$.
Section 2 gives $D^{2}u\geq-KI$, with $K$ controlled by the data.
Unless stated otherwise, all constants depend only on the data in Theorem \ref{thm1.2}.

If $\lambda_{1}\geq\cdots\geq\lambda_{n}$ are the Hessian eigenvalues, semiconvexity and \eqref{eqn2.elementary} give
\[
[\lambda_{1}-(n-2)K](\Delta u-\lambda_{1})
\leq f+\binom{n-1}{2}K^{2}.
\]
Thus, when $\Delta u$ is large, $\lambda_{1}\asymp\Delta u$ and $|\lambda_{j}|\leq C$ for $j\geq2$.
The resulting linear growth bound controls the errors from differentiating the normal field:
\begin{equation}\label{eqn3.spectral}
\left|(\sigma_{2}^{ij}[D^{2}u])D^{2}u\right|\leq C\Delta u.
\end{equation}
We use the global $C^{1}$ and tangential and mixed boundary bounds \eqref{eqn2.first} and \eqref{eqn2.boundaryblocks}.
The global reduction \cite[Proposition 3.1]{JW}, whose assumptions on $f$ reduce to $|Df|\leq C$ and $\Delta f\geq-C$ when $k=2$, gives
\begin{equation}\label{eqn3.reduction}
\sup_{\Omega}\Delta u
\leq C\left(1+\sup_{\partial\Omega}(u_{\nu\nu})_{+}\right).
\end{equation}
Thus it remains to bound $u_{\nu\nu}$ on $\partial\Omega$.
Using semiconvexity, we first estimate this derivative in terms of interior second derivatives.
A weak interior estimate then closes the argument.

\subsection{Boundary reduction and barriers}

Let $d(x):=\operatorname{dist}(x,\partial\Omega)$.
In a fixed interior neighborhood of $\partial\Omega$, set
\[
\nu:=Dd,\quad P:=I-\nu\otimes\nu,\quad \pi(x):=x-d\nu,\quad A:=-D^{2}d,\quad \tau:=\operatorname{tr}A.
\]
Here $\nu$ is the inward normal, $P$ is the tangent projection, and $A$ is the second fundamental form of the parallel hypersurfaces.
Strict mean convexity gives $\tau\geq\tau_{*}>0$ after reducing the width of this neighborhood; $A$ need not be positive definite.
For scalars and tangent vector fields, $D_{T}v=PDv$ and $D_{T}V=P(DV)P$ denote their intrinsic derivatives.
Consequently,
\[
D_{T}^{2}v=PD^{2}vP+v_{\nu}A,\quad
\Delta_{T}v=\Delta v-v_{\nu\nu}+\tau v_{\nu}.
\]
The product rules used below are
\[
\partial_{k}P_{ij}=A_{ik}\nu_{j}+\nu_{i}A_{jk},\quad \partial_{k}A_{ij}=\partial_{i}A_{kj},\quad \partial_{\nu}A=A^{2},\quad \tau_{\nu}=|A|^{2}.
\]
Greek indices refer to an orthonormal tangential frame.
The entries $u_{\alpha\beta}$ and $u_{\alpha\nu}$ are contractions of the Euclidean Hessian, whereas $u_{\nu}=Du\cdot\nu$.
The equation and admissibility give
\begin{equation}\label{eqn3.blocks}
\begin{cases}
f=\sigma_{2}^{\nu\nu}u_{\nu\nu}
+\sigma_{2}[(u_{\alpha\beta})]-|(u_{\alpha\nu})|^{2}\leq(\Delta u)\sigma_{2}^{\nu\nu},\\
|(u_{\alpha\beta})|^{2}+2|(u_{\alpha\nu})|^{2}
\leq2(\Delta u)\sigma_{2}^{\nu\nu},\\
0<\sigma_{2}^{\nu\nu}<2\Delta u.
\end{cases}
\end{equation}

Taylor's inequality for the nonnegative right-hand side on parallel hypersurfaces and inward normal segments gives
\begin{equation}\label{eqn3.rhs}
|D_{T}f|^{2}\leq Cf,\quad
(-f_{\nu})_{+}\leq C\sqrt f\leq C\Delta u.
\end{equation}
Only values of $f$ in $\overline{\Omega}$ enter these inequalities.
Extend the boundary tangent gradient normally by $Y=D_{T}\varphi\circ\pi$.
Thus $Y\cdot\nu=0$ and $Y_{\nu}=0$.
In this boundary neighborhood, put
\[
m:=\frac{\operatorname{div}_{T}Y}{\tau},\quad q:=m-u_{\nu},
\]
where $\operatorname{div}_{T}Y=\operatorname{tr}(P(DY)P)$.
On $\partial\Omega$, set
\[
C_{0}:=D_{T}^{2}\varphi-mA,\quad p_{0}:=D_{T}m+AD_{T}\varphi,
\]
and extend $C_{0},p_{0}$ normally.
The boundary identities are
\begin{equation}\label{eqn3.compatibility}
(u_{\alpha\beta})=C_{0}+qA,\quad \sigma_{2}^{\nu\nu}=\tau q>0,\quad
(u_{\alpha\nu})=p_{0}-D_{T}q,\quad \operatorname{tr}C_{0}=0.
\end{equation}
Since $Y\in C^{2,1}$ and $\tau\geq\tau_{*}>0$, the defining formula for $m$ gives a controlled $C^{1,1}$ norm.
Its values and first tangential derivatives differ from their normal extensions by $O(d)$.

We now show that $q$ controls $f$ and the squared boundary terms:
\begin{equation}\label{eqn3.weighted}
f+|(u_{\alpha\beta})|^{2}+|C_{0}|^{2}+|p_{0}|^{2}
\leq Cq\quad\text{on }\partial\Omega.
\end{equation}
The bound $\sigma_{2}[(u_{\alpha\beta})]\geq f-C\sigma_{2}^{\nu\nu}$ from Section 2 and \eqref{eqn3.compatibility} already control the first three terms.
It remains to estimate $|p_{0}|^{2}$.
Fix $z\in\partial\Omega$, and let $x_{z}$ minimize $q(x)+|x-z|^{2}$ on the boundary.
Then
\[
q(x_{z})\leq q(z),\quad |x_{z}-z|^{2}\leq q(z),\quad
|D_{T}q(x_{z})|\leq2\sqrt{q(z)}.
\]
By the minimizing property, the function
\[
m(x)-q(x_{z})+|x-z|^{2}-|x_{z}-z|^{2}
\]
supports $u_{\nu}$ from above at $x_{z}$ and has a uniformly bounded $C^{1,1}$ norm.
In a fixed graph chart with $e_{n}=\nu(x_{z})$ and $\nu\cdot e_{n}>0$, the identity $u_{n}=Y_{n}+u_{\nu}\nu_{n}$ converts this into a $C^{1,1}$ upper support for $u_{n}$.
Estimate \eqref{eqn2.supportcontact} gives $u_{\nu\nu}(x_{z})\leq C$.
Equations \eqref{eqn3.blocks} and \eqref{eqn3.compatibility}, together with the bound for $D_{T}q(x_{z})$, give
\[
|(u_{\alpha\nu})(x_{z})|^{2}+|p_{0}(x_{z})|^{2}\leq Cq(z).
\]
Since $p_{0}$ is Lipschitz and $|x_{z}-z|^{2}\leq q(z)$, the same bound holds for $|p_{0}(z)|^{2}$, proving \eqref{eqn3.weighted}.

Since $\operatorname{tr}C_{0}=0$, the boundary equation becomes
\[
\tau qu_{\nu\nu}
=f+\frac{1}{2}|C_{0}|^{2}+|(u_{\alpha\nu})|^{2}+q(A:C_{0})-q^{2}\sigma_{2}[A],
\]
which implies 
\begin{equation}\label{eqn3.boundary_equation}
c\frac{|(u_{\alpha\nu})|^{2}}{q}-C\leq u_{\nu\nu}\leq C\left(1+\frac{|(u_{\alpha\nu})|^{2}}{q}\right).
\end{equation}
Thus the remaining boundary estimate is equivalent to a bound for $|(u_{\alpha\nu})|^{2}/q$, uniform as $q$ approaches zero.
We use a positive denominator $Z$ in this neighborhood, with boundary value $q$.
First we control $\sigma_{2}^{\nu\nu}/Z$, whose boundary value is $\tau$; then we estimate the mixed derivative quotient.

Choose a fixed large $N$ and a fixed small $T_{0}>0$ so that $\rho:=d-Nd^{2}$ satisfies
\begin{equation}\label{eqn3.barrier}
\frac{3}{4}d\leq\rho\leq d,\quad \frac{1}{2}\leq\rho'\leq1,\quad
\sigma_{2}^{ij}\rho_{ij}\leq-c\Delta u \quad(0\leq d\leq2T_{0}).
\end{equation}
Indeed, at $d=0$ the Hessian of $-\rho$ has blocks $A$ and $2N$, so $\operatorname{tr}D^{2}(-\rho)-|D^{2}(-\rho)|\geq c>0$ for large $N$.
This inequality persists in the boundary neighborhood and implies \eqref{eqn3.barrier} by $|D^{2}u|\leq\Delta u$.
Fix the geometric constants in this boundary neighborhood.
All smallness conditions on $T$ below are imposed before $\Lambda$ is chosen.
By \eqref{eqn2.first}, \eqref{eqn3.spectral}, and \eqref{eqn3.rhs},
\[
|\sigma_{2}^{ij}(PDu-Y)_{ij}|\leq C\Delta u.
\]
Comparison with a fixed multiple of $\rho$ in the boundary neighborhood gives
\begin{equation}\label{eqn3.W}
W:=\frac{Y-PDu}{d},\quad |W|\leq C.
\end{equation}
This quotient extends to the boundary, where $W=-(u_{\alpha\nu})$.

For a fixed sufficiently small $T\leq T_{0}$ and every $\Lambda\geq\Lambda_{0}$, put
\[
Z:=q-u+\widetilde\varphi+\Lambda\rho,\quad
\widetilde\varphi:=\varphi\circ\pi.
\]
We use the following bounds for $Z$ in both comparisons:
\begin{align}
&Z|_{d=0}=q,\quad 
Z\geq c(f+|C_{0}|^{2}+|p_{0}|^{2}+\Lambda d),\notag\\
&\sigma_{2}^{ij}Z_{ij}\leq-c\Lambda\Delta u,\label{eqn3.Zbounds}\\
&|q|\leq C,\quad q\geq-Cd,\notag\\
&-\frac{C}{\Lambda}\leq\frac{q}{Z}\leq1,\quad
Z\leq C(1+\Lambda T),\label{eqn3.qbounds}\\
&|D_{T}Z+(u_{\alpha\nu})|^{2}
+|D_{T}q+(u_{\alpha\nu})|^{2}\leq CZ,\label{eqn3.tangenterrors}\\
&c\Lambda\leq Z_{\nu}+u_{\nu\nu}\leq C\Lambda.\label{eqn3.normalerror}
\end{align}
For the differential bounds, use $Dq=Dm-D^{2}u\,\nu+APDu$ and
\[
\sigma_{2}^{ij}(u_{\nu})_{ij}
=f_{\nu}+2\sigma_{2}^{ij}u_{jk}d_{ki}
+Du\cdot(\sigma_{2}^{ij}d_{kij})_{k=1}^{n}.
\]
Equations \eqref{eqn3.spectral}, \eqref{eqn3.rhs}, and \eqref{eqn3.barrier} give $\sigma_{2}^{ij}Z_{ij}\leq-c\Lambda\Delta u$ and $\sigma_{2}^{ij}(\tau q)_{ij}\leq C\Delta u$ for large $\Lambda_{0}$.
To prove the lower bound without differentiating the Lipschitz field $p_{0}$ twice, fix $z\in\partial\Omega$ and use
\[
g_{z}=f+\frac{1}{2}(|C_{0}(z)|^{2}+|p_{0}(z)|^{2})-C|\pi(x)-z|^{2},
\]
where the fixed $C$ is chosen using the Lipschitz norms of $C_{0},p_{0}$.
Then $g_{z}\leq f+|C_{0}|^{2}+|p_{0}|^{2}$ on the boundary and $|\sigma_{2}^{ij}(g_{z})_{ij}|\leq C\Delta u$ in the boundary neighborhood.
By \eqref{eqn3.weighted}, $q\geq c g_{z}$ on the boundary.
At $\Lambda=\Lambda_{0}$, comparison with $cg_{z}+c'\Lambda_{0}d$, for fixed small $c,c'>0$, gives $Z\geq cg_{z}+c'\Lambda_{0}d$.
Choosing $z=\pi(x)$ after each fixed comparison and adding $(\Lambda-\Lambda_{0})\rho$ proves the lower bound in \eqref{eqn3.Zbounds}.
The remaining bounds follow from \eqref{eqn2.first}, \eqref{eqn3.W}, and the identities
\[
D_{T}q+(u_{\alpha\nu})=D_{T}m+APDu=p_{0}+O(d),\quad
D_{T}Z+(u_{\alpha\nu})=p_{0}+O(d).
\]
The bound $q\geq-Cd$ is fixed at an initial value of $\Lambda_{0}$ in the original boundary neighborhood and is retained when $T$ is reduced or $\Lambda_{0}$ is enlarged.

The distance is $C^{3,1}$; $\nu,P,\pi,Y$ are $C^{2,1}$; $A,\tau,m,C_{0}$ are $C^{1,1}$; and $p_{0}$ is Lipschitz.
All differentiated identities below hold almost everywhere.
For each fixed strictly admissible solution, $(\sigma_{2}^{ij})$ is uniformly positive definite on compact subsets of the boundary neighborhood.
The exact quotient identities retain the gradient terms and, above the stated thresholds, give locally an inequality of the form
\[
\sigma_{2}^{ij}h_{ij}\geq c-C(|Dh|+|Dh|^{2}),\quad c>0.
\]
For $h\in C^{1,1}$, the gradient terms form a locally bounded drift, and the maximum principle \cite[Chapter 9]{GT} excludes an interior maximum.
The ellipticity constants enter only this qualitative comparison.
The physical boundary calculations use continuous first derivatives.

\subsection{Tangential trace comparison}

The quantity $\tau q$ agrees with the tangential trace on the boundary.
The comparison below gives the upper bound $\sigma_{2}^{\nu\nu}/Z\leq\tau+\eta_{0}$ needed to control the curvature term in the mixed derivative estimate.
Set $d_{0}:=\sigma_{2}^{\nu\nu}-\tau q$.

\begin{lemma}\label{lem3.trace}
There exist $T>0$ and $\Lambda_{0}>0$, depending only on the data in Theorem \ref{thm1.2}, such that, for every $\Lambda\geq\Lambda_{0}$,
\[
\sup_{0<d\leq T}\frac{(d_{0}^{+})^{2}}{\rho Z}
\leq C+\frac{C}{\Lambda T^{2}}
\left(1+\sup_{\{d=T\}}\sigma_{2}^{\nu\nu}\right)^{2}.
\]
Here $C$ depends only on the same data and is independent of $\Lambda$.
\end{lemma}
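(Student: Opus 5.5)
We apply the maximum principle to the quotient
\[
H:=\frac{(d_{0}^{+})^{2}}{\rho Z}
\]
on the collar $\{0<d<T\}$, restricted to the open set where $d_{0}>0$. The approach follows Trudinger's tangential trace argument: $d_{0}$ vanishes on $\partial\Omega$, $\rho$ vanishes linearly there, and $Z$ is a positive superbarrier.

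\textbf{Boundary behavior.} On $\{d=0\}$ we have $d_{0}=0$ by \eqref{eqn3.compatibility}. We need $H$ to stay bounded near the boundary, not merely $d_0=0$. For this we use that $d_{0}$ is Lipschitz with $|d_{0}|\leq C$ near the boundary and $Z\geq c\Lambda d$ from \eqref{eqn3.Zbounds}. These give $H\leq C d_{0}^{2}/(\Lambda d^{2})$. Expanding $d_{0}$ in the normal direction, its normal derivative is bounded through \eqref{eqn3.normalerror} and the mixed bounds \eqref{eqn3.W}. Hence $H$ is bounded by a data constant near $d=0$ and stays below the right-hand side.

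On $\{d=T\}$ we use $\rho\geq\frac34 T$, $Z\geq c\Lambda T$ and $d_{0}^{+}\leq\sigma_{2}^{\nu\nu}+C$. These give
\[
H\leq \frac{C}{\Lambda T^{2}}\Bigl(1+\sup_{\{d=T\}}\sigma_{2}^{\nu\nu}\Bigr)^{2},
\]
which is exactly the second term of the claimed bound.

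\textbf{Interior maximum.} At an interior maximum where $H$ exceeds a large constant $C$, we compute $\sigma_{2}^{ij}(\log H)_{ij}$. The $Z$ term contributes $-\sigma_{2}^{ij}Z_{ij}/Z\geq c\Lambda\Delta u/Z$, and the $\rho$ term contributes $c\Delta u/\rho$ by \eqref{eqn3.barrier}. Both are good terms.

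For $d_{0}$ itself, note that $\sigma_{2}^{\nu\nu}=\Delta u-u_{\nu\nu}$ is, up to terms in $Du$ and $A$, the tangential Laplacian $\Delta_{T}u$. Differentiating the equation in tangential directions, as in the ball case, gives
\[
\sigma_{2}^{ij}(\Delta_{T}u)_{ij}\geq -C\Delta u-\text{(third order terms)}.
\]
Here the concavity $-\sigma_{2}^{ij,st}u_{ij\alpha}u_{st\alpha}\geq0$ has the good sign for the tangential second derivatives. The commutator terms from differentiating $P$ and $\nu$ are controlled by \eqref{eqn3.spectral}, and $f_{\alpha\alpha}$ is bounded. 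Together with $\sigma_{2}^{ij}(\tau q)_{ij}\leq C\Delta u$, this yields
\[
\sigma_{2}^{ij}(d_{0})_{ij}\geq -C\Delta u-C\,\text{(mixed third-order terms)}.
\]

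\textbf{Main obstacle.} The hardest step is absorbing the gradient and cross terms
\[
\sigma_{2}^{ij}\frac{(d_{0})_{i}}{d_{0}}\Bigl(\frac{\rho_{j}}{\rho}+\frac{Z_{j}}{Z}\Bigr)
\]
and the third-order error terms. At the maximum we use the first-order condition
\[
\frac{2Dd_{0}}{d_{0}}=\frac{D\rho}{\rho}+\frac{DZ}{Z}
\]
to convert these into squares of $D\rho/\rho$ and $DZ/Z$. The $\rho$ part is handled by choosing the exponent structure so that $\sigma_{2}^{ij}\rho_{i}\rho_{j}/\rho^{2}$ is dominated by $c\Delta u/\rho$, which holds when $d_{0}^{2}\gtrsim\rho Z$. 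For the $Z$ part we use \eqref{eqn3.tangenterrors} and \eqref{eqn3.normalerror}, combined with the semiconvexity spectral structure: $\lambda_{1}\asymp\Delta u$, with the remaining eigenvalues bounded. This lets $\sigma_{2}^{ij}$ be compared with $\Delta u$ times a bounded matrix, up to the direction of $\lambda_1$, where $\sigma_{2}^{ii}$ is bounded.

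I expect the delicate case to be when the largest eigenvector is nearly normal. There, $d_{0}$ small relative to $\Delta u$ must be used, with the threshold $H\geq C$ forcing $d_{0}\geq \sqrt{C\rho Z}$. Balancing this against $\Lambda\Delta u/Z$ gives a contradiction for $C$ large independent of $\Lambda$, with $T$ and $\Lambda_{0}$ fixed beforehand so that the error constants are uniform.
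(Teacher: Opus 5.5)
Your proposal has a genuine gap at the step that carries the lemma: the lower bound for $\sigma_{2}^{ij}(d_{0})_{ij}$. On the ball, the commutators collapse by homogeneity into $-4x\cdot Df-2kf$, as in \eqref{eqn5.traceidentity}. On a general mean convex domain, differentiating $\sigma_{2}^{\nu\nu}=P_{kl}u_{kl}$ twice produces two terms that cannot be treated as you claim.

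First, $2\sigma_{2}^{ij}\partial_{i}P_{kl}\,u_{klj}=4\sigma_{2}^{ij}A_{ki}u_{k\nu j}$ is linear in third derivatives, with coefficients of size $\Delta u$. It has no data bound. The first-order condition at the maximum only fixes $Dd_{0}$, i.e.\ tangential traces of third derivatives, not these contractions. And once you use the concavity term only through its sign, nothing remains to absorb them.

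Second, $\sigma_{2}^{ij}\partial_{ij}P_{kl}\,u_{kl}$ contains $2\sigma_{2}^{ij}(\partial_{i}A_{kj})u_{k\nu}$. By the Codazzi symmetry, its $(\Delta u)\delta_{ij}$ part contains $2(\Delta u)\,D_{T}\tau\cdot(u_{\alpha\nu})$. This is not a contraction of $(\sigma_{2}^{ij})$ against $D^{2}u$, so \eqref{eqn3.spectral} does not apply, and by \eqref{eqn3.blocks} it can be as large as $(\Delta u)^{3/2}(\sigma_{2}^{\nu\nu})^{1/2}$. After division by $d_{0}$, your good terms $c\Delta u/\rho+c\Lambda\Delta u/Z$ absorb this loss only when $|(u_{\alpha\nu})|\lesssim d_{0}/\rho$. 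Since $Z\geq c\Lambda\rho$, the $Z$ term is no better than the $\rho$ term. The regime $|(u_{\alpha\nu})|\gg d_{0}/\rho$, for instance with $u_{\nu\nu}$ very large, is not excluded. Your closing remarks about a nearly normal top eigenvector and balancing $d_{0}\geq\sqrt{C\rho Z}$ against $\Lambda\Delta u/Z$ do not supply a mechanism for it.

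The missing idea is to retain the third-derivative squares and bound them from below through stationarity. The paper writes
$-\sum_{a}\sigma_{2}^{ij,st}(\mathcal A_{a})_{ij}(\mathcal A_{a})_{st}\geq\sum_{a}|X_{a}|^{2}+2|\mathcal Y|^{2}-C$,
where $X_{a}$ and $\mathcal Y$ are the tangential and mixed blocks of the moving-frame third derivatives with a multiple of $D^{2}u$ subtracted. It rewrites the cross terms as $4\sigma_{2}^{\nu\nu}A:\mathcal Y-4u_{a\nu}A:X_{a}+\zeta\cdot D_{T}\sigma_{2}^{\nu\nu}$ up to controlled errors and completes squares. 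The term $\zeta\cdot D_{T}\sigma_{2}^{\nu\nu}$ is handled by tangential stationarity, \eqref{eqn3.tangenterrors}, and the cancellation $|\zeta_{\alpha}u_{\alpha\nu}|\leq C\Delta u$ coming from the bounded spectral tail. Since $\operatorname{tr}\mathcal Y=\partial_{\nu}\sigma_{2}^{\nu\nu}+\tau u_{\nu\nu}-A:(u_{\alpha\beta})-(u_{\alpha\nu})\cdot D_{T}\sigma_{2}^{\nu\nu}/\sigma_{2}^{\nu\nu}$, one can insert the normal and tangential stationarity conditions and eliminate $u_{\nu\nu}$ by the equation. This yields lower bounds for $\operatorname{tr}\mathcal Y$. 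In the critical regime $|(u_{\alpha\nu})|>cd_{0}/\rho$ one gets $\operatorname{tr}\mathcal Y\geq d_{0}/(8\rho)+\tau_{*}|(u_{\alpha\nu})|^{2}/(4\sigma_{2}^{\nu\nu})$. There $\Delta u\leq3|(u_{\alpha\nu})|^{2}/\sigma_{2}^{\nu\nu}$, so $|\mathcal Y|^{2}\geq c(\Delta u)^{2}$, which absorbs the $\Delta u\,|(u_{\alpha\nu})|$ loss.

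By contrast, the step you single out as the main obstacle is free. With $2Dd_{0}/d_{0}=D\rho/\rho+DZ/Z$, the gradient terms combine into $\frac{1}{2}\sigma_{2}^{ij}(\rho_{i}/\rho-Z_{i}/Z)(\rho_{j}/\rho-Z_{j}/Z)\geq0$. So no comparison of $\sigma_{2}^{ij}\rho_{i}\rho_{j}/\rho^{2}$ (which is of order $\sigma_{2}^{\nu\nu}/\rho^{2}$) with $c\Delta u/\rho$ is needed.

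Your boundary step is also incorrect as stated. $\partial_{\nu}d_{0}$ involves $u_{\alpha\alpha\nu}$, which neither \eqref{eqn3.normalerror} nor \eqref{eqn3.W} controls. The correct argument is qualitative: for each fixed strictly admissible solution, $Z=q>0$ on $\partial\Omega$ and $d_{0}=O(d)$, so $H\to0$ as $d\to0$.
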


\begin{proof}
Put $h:=(d_{0}^{+})^{2}/(\rho Z)$.
For each fixed strictly admissible solution, $Z=q>0$ at $d=0$ and
$d_{0}=O(d)$, so $h\to0$ there.
At $d=T$, the lower bound for $Z$ gives
\[
h\leq\frac{C}{\Lambda T^{2}}
\left(1+\sup_{\{d=T\}}\sigma_{2}^{\nu\nu}\right)^{2}.
\]
We exclude interior maxima above a fixed threshold by retaining the positive third derivative squares.
At the point under consideration, choose Cartesian coordinates with $e_{n}=\nu$; all third derivatives below are ordinary Cartesian derivatives.
For each fixed $a<n$, define the symmetric matrix $\mathcal A_{a}$ by
\[
\begin{aligned}
(\mathcal A_{a})_{\alpha\beta}
&:=u_{\alpha\beta a}+A_{\alpha a}u_{\beta\nu}+A_{\beta a}u_{\alpha\nu},\\
(\mathcal A_{a})_{\alpha\nu}
&:=u_{\alpha\nu a}+u_{\nu\nu}A_{\alpha a}-u_{\alpha\beta}A_{\beta a},\\
(\mathcal A_{a})_{\nu\nu}
&:=u_{\nu\nu a}-2u_{\alpha\nu}A_{\alpha a}.
\end{aligned}
\]
These are the derivatives of the Hessian in the moving tangential and normal frame.
Thus $\sigma_{2}^{ij}(\mathcal A_{a})_{ij}=f_{a}$ and
$\operatorname{tr}(\mathcal A_{a})_{TT}=\partial_{a}\sigma_{2}^{\nu\nu}$.
Separate the part proportional to the tangential trace derivative by setting
\[
(X_{a})_{\alpha\beta}
:=(\mathcal A_{a})_{\alpha\beta}
-\frac{\partial_{a}\sigma_{2}^{\nu\nu}}{\sigma_{2}^{\nu\nu}}u_{\alpha\beta},\quad
(Y_{a})_{\alpha}
:=(\mathcal A_{a})_{\alpha\nu}
-\frac{\partial_{a}\sigma_{2}^{\nu\nu}}{\sigma_{2}^{\nu\nu}}u_{\alpha\nu},\quad
\mathcal Y:=(Y_{1},\ldots,Y_{n-1}).
\]
Eliminating the normal-normal entry with $\sigma_{2}^{ij}(\mathcal A_{a})_{ij}=f_{a}$ gives
\[
-\sigma_{2}^{ij,st}(\mathcal A_{a})_{ij}(\mathcal A_{a})_{st}
=|X_{a}|^{2}+2|Y_{a}|^{2}+\frac{2f(\partial_{a}\sigma_{2}^{\nu\nu})^{2}}{(\sigma_{2}^{\nu\nu})^{2}}
-\frac{2(\partial_{a}\sigma_{2}^{\nu\nu})f_{a}}{\sigma_{2}^{\nu\nu}}.
\]
The bound $|D_{T}f|^{2}\leq Cf$ absorbs the last two terms and gives
\[
-\sum_{a=1}^{n-1}\sigma_{2}^{ij,st}(\mathcal A_{a})_{ij}(\mathcal A_{a})_{st}
\geq\sum_{a=1}^{n-1}|X_{a}|^{2}+2|\mathcal Y|^{2}-C.
\]

Differentiating $\sigma_{2}^{\nu\nu}=P_{ij}u_{ij}$ and retaining all third derivatives of $u$ gives
\[
\begin{aligned}
\sigma_{2}^{ij}(\sigma_{2}^{\nu\nu})_{ij}
&\geq -\sum_{a=1}^{n-1}\sigma_{2}^{ij,st}(\mathcal A_{a})_{ij}(\mathcal A_{a})_{st}+4\sigma_{2}^{ij}\nu_{i}\partial_{j}(A_{\alpha\beta}u_{\alpha\beta})
+2(\Delta u)u_{\alpha\nu}(D_{T}\tau)_{\alpha}\\
&\hspace{4.5mm}-C(\Delta u)\sigma_{2}^{\nu\nu}-C.
\end{aligned}
\]
The lower-order terms are bounded by $C(\Delta u)\sigma_{2}^{\nu\nu}$
using \eqref{eqn3.blocks} and $|u_{\nu\nu}|\leq\Delta u$;
$|P_{ij}f_{ij}|\leq C$.
The identities
\[
\partial_{\nu}(A_{\alpha\beta}u_{\alpha\beta})
=A_{\alpha\beta}u_{\nu\alpha\beta}+(A^{2})_{\alpha\beta}u_{\alpha\beta},\quad
(D_{T})_{a}(A_{\alpha\beta}u_{\alpha\beta})
=A_{\alpha\beta}(\mathcal A_{a})_{\alpha\beta}
+u_{\alpha\beta}(D_{T})_{a}A_{\alpha\beta}
\]
give
\[
4\sigma_{2}^{ij}\nu_{i}\partial_{j}(A_{\alpha\beta}u_{\alpha\beta})
\geq 4\sigma_{2}^{\nu\nu}A:\mathcal Y-4\sum_{a=1}^{n-1}u_{a\nu}(A:X_{a})+\zeta\cdot D_{T}\sigma_{2}^{\nu\nu}-C(\Delta u)\sigma_{2}^{\nu\nu},
\]
where
\[
\zeta_{\alpha}:=4\left(A_{\alpha\beta}u_{\beta\nu}
-\frac{A_{\gamma\delta}u_{\gamma\delta}}{\sigma_{2}^{\nu\nu}}u_{\alpha\nu}\right).
\]
Completing the $X_{a}$ and $\mathcal Y$ squares retains $|\mathcal Y|^{2}$.
Subtracting the bound for $\sigma_{2}^{ij}(\tau q)_{ij}$ gives
\begin{equation}\label{eqn3.trace_differential}
\sigma_{2}^{ij}(d_{0})_{ij}
\geq |\mathcal Y|^{2}+\zeta\cdot D_{T}\sigma_{2}^{\nu\nu}
+2(\Delta u)u_{\alpha\nu}(D_{T}\tau)_{\alpha}-C(\Delta u)(\sigma_{2}^{\nu\nu}+1)-C.
\end{equation}

To control the first derivative term, we use $|\zeta|+|\zeta_{\alpha} u_{\alpha\nu}|\leq C\Delta u$.
The first bound follows from \eqref{eqn3.blocks}.
For the second, if $\sigma_{2}^{\nu\nu}\leq1$, semiconvexity bounds
$(u_{\alpha\beta})$ and $\sum_{\alpha=1}^{n-1} u_{\alpha\nu}^{2}\leq(\Delta u)\sigma_{2}^{\nu\nu}$.
If $\sigma_{2}^{\nu\nu}\geq1$, diagonalize $(u_{\alpha\beta})$.
The bounded spectral tail gives $u_{11}=\sigma_{2}^{\nu\nu}+O(1)$ and
$A_{\alpha\beta}u_{\alpha\beta}/\sigma_{2}^{\nu\nu}=A_{11}+O((\sigma_{2}^{\nu\nu})^{-1})$.
The identity
\[
u_{\alpha\nu}u_{\beta\nu}
=(\Delta u)u_{\alpha\beta}
-u_{\alpha\gamma}u_{\gamma\beta}-\sigma_{2}^{\alpha j}u_{j\beta}
\]
and its tangential trace cancel the $\Delta u$ terms in $\zeta_{\alpha} u_{\alpha\nu}/4$.
The sole unbounded square has coefficient $O((\sigma_{2}^{\nu\nu})^{-1})$, so it contributes at most $C(\sigma_{2}^{\nu\nu}+1)\leq C\Delta u$.
The remaining terms are bounded by \eqref{eqn3.spectral}.

For $h=d_{0}^{2}/(\rho Z)>0$, the exact identities before imposing stationarity are
\[
Dd_{0}=\frac{d_{0}}{2}\left(\frac{D\rho}{\rho}+\frac{DZ}{Z}\right)
+\frac{d_{0}}{2h}Dh
\]
and
\[
\begin{aligned}
\rho \sigma_{2}^{ij}h_{ij}
&=\frac{2d_{0}}{Z}\sigma_{2}^{ij}(d_{0})_{ij}
-h\sigma_{2}^{ij}\rho_{ij}
-\frac{\rho h}{Z}\sigma_{2}^{ij}Z_{ij}+\frac{\rho h}{2}\sigma_{2}^{ij}
\left(\frac{Z_{i}}{Z}-\frac{\rho_{i}}{\rho}\right)
\left(\frac{Z_{j}}{Z}-\frac{\rho_{j}}{\rho}\right)\\
&\hspace{4.5mm}-\frac{1}{Z}\sigma_{2}^{ij}h_{i}(\rho Z)_{j}
+\frac{\rho}{2h}\sigma_{2}^{ij}h_{i} h_{j}.
\end{aligned}
\]
At a positive interior maximum, $Dh=0$ and $Z\geq c\Lambda\rho$ give $d_{0}/\rho\geq c\sqrt{\Lambda h}$.
For a sufficiently large threshold, $q\geq-Cd$ and $|q|\leq C$ imply
$d_{0}\leq2\sigma_{2}^{\nu\nu}$ and $\sigma_{2}^{\nu\nu}\leq d d_{0}/\rho+C$.
Moreover,
\[
D_{T}\sigma_{2}^{\nu\nu}
+\left(\frac{d_{0}}{2Z}+\tau\right)(u_{\alpha\nu})=\frac{d_{0}}{2Z}\bigl(D_{T}Z+(u_{\alpha\nu})\bigr)+qD_{T}\tau+\tau\bigl(D_{T}q+(u_{\alpha\nu})\bigr),
\]
\[
\left|D_{T}\sigma_{2}^{\nu\nu}
+\left(\frac{d_{0}}{2Z}+\tau\right)(u_{\alpha\nu})\right|^{2}
\leq C\left(\frac{d_{0}^{2}}{Z}+Z\right).
\]
The bounded errors in \eqref{eqn3.tangenterrors} give
$\zeta\cdot D_{T}\sigma_{2}^{\nu\nu}\geq-C(\Delta u)(1+d_{0}/Z)$.
By \eqref{eqn3.barrier} and \eqref{eqn3.Zbounds},
\[
-\sigma_{2}^{ij}\rho_{ij}-\frac{\rho}{Z}\sigma_{2}^{ij}Z_{ij}\geq c\Delta u.
\]
The square in the quotient identity is nonnegative.

We estimate $\mathcal Y$ through its trace.
The normal derivative relation
\[
\partial_{\nu} \sigma_{2}^{\nu\nu}+\tau u_{\nu\nu}
=(d_{0})_{\nu}+\tau_{\nu} q+\tau m_{\nu}
\]
combined with stationarity and \eqref{eqn3.blocks}, gives
\[
\begin{aligned}
\operatorname{tr}\mathcal Y&=\frac{\rho'd_{0}}{2\rho}
+\frac{d_{0}}{2Z}(Z_{\nu}+u_{\nu\nu})+\frac{d_{0}\bigl(\sigma_{2}[(u_{\alpha\beta})]-f\bigr)}{2Z\sigma_{2}^{\nu\nu}}
+\frac{\tau}{\sigma_{2}^{\nu\nu}}\sum_{\alpha=1}^{n-1} u_{\alpha\nu}^{2}
-A_{\alpha\beta}u_{\alpha\beta}\\
&\quad+\tau_{\nu} q+\tau m_{\nu}-\frac{(u_{\alpha\nu})}{\sigma_{2}^{\nu\nu}}\cdot
\left[D_{T}\sigma_{2}^{\nu\nu}
+\left(\frac{d_{0}}{2Z}+\tau\right)(u_{\alpha\nu})\right].
\end{aligned}
\]
Completion of the last term against half of
$\tau\sum_{\alpha=1}^{n-1} u_{\alpha\nu}^{2}/\sigma_{2}^{\nu\nu}$ costs at most
\[
C\frac{d_{0}^{2}}{\sigma_{2}^{\nu\nu}Z}+C\frac{Z}{\sigma_{2}^{\nu\nu}}
\leq C\frac{d_{0}}{\rho}\left(\frac{1}{\Lambda}+\frac{1}{h}\right).
\]
Also $d_{0}f/(2Z\sigma_{2}^{\nu\nu})\leq C$, and semiconvexity gives
\[
\sigma_{2}[(u_{\alpha\beta})]^{-}+|A_{\alpha\beta}u_{\alpha\beta}|
\leq C(\sigma_{2}^{\nu\nu}+1).
\]
When $\Delta u<1$, the trace already gives a contradiction, since
$|\sigma_{2}[(u_{\alpha\beta})]-f|\leq2\sigma_{2}^{\nu\nu}$.
The trace identity and completion above give
\[
\operatorname{tr}\mathcal Y
\geq\frac{d_{0}}{\rho}
\left(\frac{\rho'}{2}-\frac{C}{\Lambda}-\frac{C}{h}\right)-C.
\]
For sufficiently large $\Lambda_{0}$ and comparison threshold, $|\mathcal Y|^{2}\geq c(d_{0}/\rho)^{2}$.
The other terms in \eqref{eqn3.trace_differential} cost at most
$C(1+d_{0}/(\Lambda\rho))$, so $\sigma_{2}^{ij}(d_{0})_{ij}>0$.
This contradicts the quotient identity.

For $\Delta u\geq1$, combining the quotient identity with \eqref{eqn3.trace_differential} gives
\begin{equation}\label{eqn3.trace_quotient}
0\geq\frac{2d_{0}}{Z}|\mathcal Y|^{2}+ch\Delta u-C\frac{2d_{0}}{Z}(\Delta u)
\left(\sigma_{2}^{\nu\nu}+1+\sqrt{\sum_{\alpha=1}^{n-1} u_{\alpha\nu}^{2}}+\frac{d_{0}}{Z}\right).
\end{equation}
After division by $h\Delta u$, all losses except
$\sqrt{\sum_{\alpha=1}^{n-1} u_{\alpha\nu}^{2}}$ are bounded by
$C(d+\rho/d_{0}+\Lambda^{-1})$.
If $|(u_{\alpha\nu})|\leq c d_{0}/\rho$, with a sufficiently small fixed $c>0$, this loss is absorbed as well, a contradiction.

Otherwise $|(u_{\alpha\nu})|>c d_{0}/\rho$.
The possible negative contribution of $\sigma_{2}[(u_{\alpha\beta})]$ in the trace identity, divided by
$\sum_{\alpha=1}^{n-1} u_{\alpha\nu}^{2}/\sigma_{2}^{\nu\nu}$, is bounded by
\[
C\frac{d_{0}(\sigma_{2}^{\nu\nu}+1)}{Z\sum_{\alpha=1}^{n-1} u_{\alpha\nu}^{2}}
\leq C
\left(\frac{d}{\Lambda}+\frac{\rho}{\Lambda d_{0}}\right).
\]
Fix $T$ sufficiently small, then enlarge $\Lambda_{0}$ and the comparison threshold, all by data-dependent choices.
The trace identity yields
\[
\operatorname{tr}\mathcal Y
\geq\frac{d_{0}}{8\rho}
+\frac{\tau_{*}}{4\sigma_{2}^{\nu\nu}}\sum_{\alpha=1}^{n-1} u_{\alpha\nu}^{2}.
\]
The equation, $f\leq CZ$ from \eqref{eqn3.Zbounds}, and $\sigma_{2}^{\nu\nu}\leq d d_{0}/\rho+C$ also give
\[
\frac{(\sigma_{2}^{\nu\nu})^{2}}{\sum_{\alpha=1}^{n-1}u_{\alpha\nu}^{2}}
\leq C\left(d+\frac{\rho}{d_{0}}\right)^{2},\quad
\Delta u\leq\frac{3}{\sigma_{2}^{\nu\nu}}
\sum_{\alpha=1}^{n-1}u_{\alpha\nu}^{2}.
\]
Consequently, using \eqref{eqn3.blocks},
\[
|\mathcal Y|^{2}\geq c(\Delta u)^{2},\quad
\frac{\sqrt{\sum_{\alpha=1}^{n-1}u_{\alpha\nu}^{2}}}{\Delta u}
\leq\frac{\sigma_{2}^{\nu\nu}}
{\sqrt{\sum_{\alpha=1}^{n-1}u_{\alpha\nu}^{2}}}
\leq C\left(d+\frac{\rho}{d_{0}}\right).
\]
Thus the mixed loss is absorbed by half of $|\mathcal Y|^{2}$.
The remaining losses in \eqref{eqn3.trace_quotient} are absorbed by $ch\Delta u$, a contradiction.
\end{proof}

Since $q\leq Z$ by \eqref{eqn3.qbounds}, Lemma \ref{lem3.trace} implies
\[
\frac{\sigma_{2}^{\nu\nu}}{Z}
\leq\tau+\frac{C}{\sqrt\Lambda}
+\frac{C(1+\sup_{\{d=T\}}\sigma_{2}^{\nu\nu})}{\Lambda T}.
\]
Fix a sufficiently small data constant $\eta_{0}>0$.
After fixing $T$ and $\Lambda_{0}$ to satisfy the restrictions below, choose
\begin{equation}\label{eqn3.Lambda}
\Lambda=\Lambda_{0}+\frac{C}{T}
\left(1+\sup_{\{d=T\}}\sigma_{2}^{\nu\nu}\right),
\end{equation}
where $\Lambda_{0}$ and $C$ are fixed sufficiently large.
Then $\sigma_{2}^{\nu\nu}/Z\leq\tau+\eta_{0}$ throughout this boundary neighborhood.

\subsection{Boundary double normal derivative estimate}

By \eqref{eqn3.boundary_equation}, it remains to estimate the mixed derivative quotient.
Using the tangential trace comparison, we prove
\begin{equation}\label{eqn3.normal_quotient}
\sup_{0\leq d\leq T}\frac{|W|^{2}}{Z}\leq C(1+\Lambda).
\end{equation}
Put $Q=|W|^{2}/Z$.
On $d=T$, \eqref{eqn3.W} and \eqref{eqn3.Zbounds} give $Q\leq C/(\Lambda T)$.
We exclude maxima above a sufficiently large multiple of $1+\Lambda$, first on $\partial\Omega$ and then in the interior of the boundary neighborhood.

On the physical boundary,
\[
W=-(u_{\alpha\nu}),\quad W_{\nu}=-\frac{1}{2}\partial_{\nu}(u_{\alpha\nu}),\quad
\partial_{\nu}(u_{\alpha\nu})=D_{T}u_{\nu\nu}+2A(u_{\alpha\nu}).
\]
Hence
\begin{equation}\label{eqn3.normal_boundary}
qQ_{\nu}=(u_{\alpha\nu})\cdot D_{T}u_{\nu\nu}
+2A((u_{\alpha\nu}),(u_{\alpha\nu}))-Z_{\nu} Q.
\end{equation}
Here $Z=q$, so $|(u_{\alpha\nu})|^{2}/q=Q$ in \eqref{eqn3.boundary_equation}.
At a tangential maximum of $Q$,
\[
D_{T}|(u_{\alpha\nu})|^{2}=QD_{T}q,\quad
D_{T}q=p_{0}-(u_{\alpha\nu}).
\]
Differentiating the boundary equation along $(u_{\alpha\nu})$ therefore gives
\begin{align}\label{eqn3.boundary_derivative}
\tau q\,(u_{\alpha\nu})\cdot D_{T}u_{\nu\nu}&=(u_{\alpha\nu})\cdot D_{T}(f+\frac{1}{2}|C_{0}|^{2})
+q (u_{\alpha\nu})\cdot D_{T}(A:C_{0})\notag\\
&\hspace{4.5mm}-q^{2}(u_{\alpha\nu})\cdot D_{T}\sigma_{2}[A]
-qu_{\nu\nu}(u_{\alpha\nu})\cdot D_{T}\tau\notag\\
&\hspace{4.5mm}+\left(\frac{f+\frac{1}{2}|C_{0}|^{2}}{q}+q\sigma_{2}[A]\right)
\bigl(|(u_{\alpha\nu})|^{2}-(u_{\alpha\nu})\cdot p_{0}\bigr).
\end{align}
The nonnegative $C^{1,1}$ function $f+|C_{0}|^{2}/2$ on the closed boundary satisfies
\[
|D_{T}(f+\frac{1}{2}|C_{0}|^{2})|+|p_{0}|\leq C\sqrt q
\]
by \eqref{eqn3.weighted} and Taylor's inequality.
For large $Q$, $0\leq|(u_{\alpha\nu})|^{2}-(u_{\alpha\nu})\cdot p_{0}\leq q(Q+C\sqrt Q)$.
The contribution of $(f+|C_{0}|^{2}/2)/q$ in \eqref{eqn3.boundary_derivative} is nonnegative, and the remaining terms give
\[
(u_{\alpha\nu})\cdot D_{T}u_{\nu\nu}\geq-C(Q^{3/2}+Q+\sqrt Q).
\]
Thus \eqref{eqn3.normal_boundary} and \eqref{eqn3.boundary_equation} imply
\[
qQ_{\nu}\geq cQ^{2}-C(Q^{3/2}+Q+\sqrt Q)-C(1+\Lambda)Q>0
\]
when $Q$ exceeds a sufficiently large fixed multiple of $1+\Lambda$.
This contradicts the inward derivative condition at a boundary maximum.
For the interior comparison, we derive the equation for $W$ and control its error terms.
Differentiating $dW=Y-PDu$ gives
\[
(u_{\alpha\beta})=D_{T}Y-u_{\nu}A-dD_{T}W,\quad
(u_{\alpha\nu})=-W-dW_{\nu},
\]
and
\[
d\operatorname{div}_{T}W=\tau q-\sigma_{2}^{\nu\nu},\quad
|D_{T}W-(D_{T}W)^{T}|\leq C.
\]
The last estimate follows because $D_{T}Y$ is symmetric at $d=0$, its skew part is $O(d)$, and $(u_{\alpha\beta}),A$ are symmetric.
A second differentiation gives
\begin{align}\label{eqn3.W_equation}
P\bigl(d\sigma_{2}^{ij}W_{ij}+2\sigma_{2}^{ij}d_{i}W_{j}\bigr)
&=-D_{T}f-\sigma_{2}^{ij}d_{ij}W-2\sigma_{2}^{\nu\nu}A(u_{\alpha\nu})\nonumber\\
&\hspace{4.5mm}-2dA(D_{T}W)(u_{\alpha\nu})-2dA(\sigma_{2}^{\alpha\beta})AW+R,
\end{align}
\[
R_{a}=-u_{\nu} \sigma_{2}^{\alpha\beta}(D_{T})_{\alpha} A_{\beta a}
+P_{ar}\sigma_{2}^{\alpha\beta}\partial_{\alpha\beta}Y_{r}+2(A(\sigma_{2}^{\alpha\beta})AY)_{a}+2(AD_{T}Y-D_{T}YA)_{a\beta}u_{\beta\nu}.
\]
We identify the coefficient of $(\Delta u)$ in $R$:
\[
P(P:D^{2}Y)+2A^{2}Y-u_{\nu} D_{T}\tau.
\]
On the physical boundary,
\[
P(P:D^{2}Y)+2A^{2}Y
=D_{T}(\Delta_{T}\varphi)+\tau A D_{T}\varphi,
\]
so this coefficient equals $\tau(\pi(x))p_{0}+qD_{T}(\tau\circ\pi)+O(d)$ throughout this boundary neighborhood.
For the displayed identity, $\partial_{\nu} D\widetilde\varphi=AD\widetilde\varphi$, $\partial_{\nu}^{2}D\widetilde\varphi=2A^{2}D\widetilde\varphi$, and $D\widetilde\varphi=Y$ on $d=0$.
Consequently, on this boundary,
\[
P:D^{2}(Y-D\widetilde\varphi)=\tau AY,\quad
P(P:D^{2}(D\widetilde\varphi))
=D_{T}\Delta\widetilde\varphi-2A^{2}D\widetilde\varphi.
\]
The $O(d)$ remainder uses only the Lipschitz continuity of the prescribed third derivatives.
By \eqref{eqn3.blocks}, \eqref{eqn3.Zbounds}, and \eqref{eqn3.tangenterrors},
\begin{equation}\label{eqn3.W_remainder}
|R|\leq C(\Delta u)\sqrt Z+C\sqrt{(\Delta u)\sigma_{2}^{\nu\nu}}.
\end{equation}

At interior points where $W\neq0$, define
\[
M:=\frac{d}{\sigma_{2}^{\nu\nu}}D_{T}W,\quad
e:=\frac{W}{|W|},\quad z:=\frac{(u_{\alpha\nu})}{|W|},\quad
\mu:=1+\frac{dZ_{\nu}}{2Z},\quad
\epsilon:=\frac{\tau q}{\sigma_{2}^{\nu\nu}}.
\]
The first derivative identities are
\begin{equation}\label{eqn3.stationarity}
\begin{cases}
M^{T}e=\frac{d}{2\sigma_{2}^{\nu\nu}|W|}(QD_{T}Z+ZD_{T}Q),\\
e\cdot z=-\mu-\frac{d}{2Q}Q_{\nu},\\
\operatorname{tr}M=-1+\epsilon,\\
|M-M^{T}|\leq\frac{Cd}{\sigma_{2}^{\nu\nu}}.
\end{cases}
\end{equation}
At an interior maximum, $DQ=0$.
To organize the quadratic terms, set
\[
\mathcal K=|z|^{2}|M|^{2}-|Mz|^{2}
+|(I+M)z+(2I-dA)e|^{2}-1+2dA(e,e).
\]
Applying the product rule to $Q$, completing the normal derivative square, and using \eqref{eqn3.W_equation} and \eqref{eqn3.stationarity} yield
\begin{equation}\label{eqn3.normal_energy}
0\geq\frac{2\sigma_{2}^{\nu\nu}Q}{d}\mathcal K+2(A_{\alpha\beta}\sigma_{2}^{\alpha\beta})Q-Cd(\Delta u)Q-C\sigma_{2}^{\nu\nu}Q-C(\Delta u)\sqrt Q-C\sqrt{(\Delta u)\sigma_{2}^{\nu\nu}Q/Z}.
\end{equation}
Here the ambient derivative of $W$ has normal component $AW$ on tangent inputs.
After retaining $|z|^{2}|M|^{2}-|Mz|^{2}$, the remaining tangential term is nonnegative because
\[
\sigma_{2}^{\nu\nu}(\sigma_{2}^{\alpha\beta})
-|(u_{\alpha\nu})|^{2}I_{n-1}
=\left(f+\frac{(\sigma_{2}^{\nu\nu})^{2}
+|(u_{\alpha\beta})|^{2}}{2}\right)I_{n-1}-\sigma_{2}^{\nu\nu}(u_{\alpha\beta})\geq0.
\]
Indeed, after diagonalizing $(u_{\alpha\beta})$, each eigenvalue is
\[
f+\frac{1}{2}\left[
(\sigma_{2}^{\nu\nu}-u_{\alpha\alpha})^{2}
+\sum_{\beta\ne\alpha}u_{\beta\beta}^{2}
\right]\geq0
\quad\text{(no summation over $\alpha$)}.
\]
The other discarded term is $-dQ\sigma_{2}^{ij}Z_{ij}/Z\geq0$.
The remaining terms follow from \eqref{eqn3.W_remainder} and
\[
\begin{aligned}
d\sigma_{2}^{ij}Z_{ij}+2\sigma_{2}^{ij}d_{i}Z_{j}
=d\sigma_{2}^{ij}Z_{ij}+2\sigma_{2}^{\nu\nu}(Z_{\nu}+u_{\nu\nu})+2(\sigma_{2}[(u_{\alpha\beta})]-f)
-2(u_{\alpha\nu})\cdot\bigl(D_{T}Z+(u_{\alpha\nu})\bigr).
\end{aligned}
\]

When $\mu\leq1/2$, the curvature term supplies the positive margin.
By \eqref{eqn3.normalerror} and $\mu\leq1/2$, $u_{\nu\nu}\geq c\Lambda+Z/d$.
Here semiconvexity gives $|(u_{\alpha\beta})|\leq C(\sigma_{2}^{\nu\nu}+1)$.
Thus the positive mean-curvature term controls the tangential Hessian error when the normal entry dominates.
Together with $\sigma_{2}^{\nu\nu}/Z\leq\tau+\eta_{0}$, this yields, after fixing $T$ sufficiently small and $\Lambda_{0}$ sufficiently large,
\begin{equation}\label{eqn3.curvature}
A_{\alpha\beta}\sigma_{2}^{\alpha\beta}\geq c(\Delta u),\quad
\frac{d}{\sigma_{2}^{\nu\nu}}A_{\alpha\beta}\sigma_{2}^{\alpha\beta}
\geq\frac{2\tau Z}{\sigma_{2}^{\nu\nu}}(1-\mu)-Cd,\quad
-\frac{C}{\Lambda}\frac{\tau Z}{\sigma_{2}^{\nu\nu}}
\leq\epsilon\leq\frac{\tau Z}{\sigma_{2}^{\nu\nu}}.
\end{equation}
Indeed, $\sigma_{2}^{\nu\nu}/u_{\nu\nu}\leq Cd$, and
\[
\frac{d}{\sigma_{2}^{\nu\nu}}(A_{\alpha\beta}\sigma_{2}^{\alpha\beta})
=\frac{2\tau Z}{\sigma_{2}^{\nu\nu}}(1-\mu)+\frac{d}{\sigma_{2}^{\nu\nu}}
\{\tau(Z_{\nu}+u_{\nu\nu})+\tau \sigma_{2}^{\nu\nu}-A:(u_{\alpha\beta})\}.
\]
The positive $\tau (Z_{\nu}+u_{\nu\nu})$ absorbs the fixed constant in $\tau \sigma_{2}^{\nu\nu}-A:(u_{\alpha\beta})\geq-C(\sigma_{2}^{\nu\nu}+1)$.

For $z\ne0$, choose the first coordinate in the direction of $z$.
For $n=2$, the following inequality is an identity; for $n\geq3$, it follows by minimizing the lower-right block subject to its trace and completing the off-diagonal squares:
\[
\begin{aligned}
\mathcal K&\geq\frac{1}{n-1}\left(\epsilon|z|+(2I-dA)e\cdot\frac{z}{|z|}\right)^{2}-1+2dA(e,e)\\
&\quad+\frac{1}{2}\left|\left(I-\frac{z\otimes z}{|z|^{2}}\right)(2I-dA)e+(M-M^{T})z\right|^{2}.
\end{aligned}
\]
Using $e\cdot z=-\mu$, Young's inequality with the fixed fraction $1/(32(n-1))$, and then absorbing the error $Cd|\epsilon||z|$ in half of the first square, we obtain
\begin{equation}\label{eqn3.matrixbound}
\mathcal K\geq-1+\frac{4}{n-1}-\frac{1}{16(n-1)}
+\frac{\epsilon^{2}|z|^{2}}{2(n-1)}-\frac{4\epsilon\mu}{n-1}
-Cd-C|z|^{2}d^{2}/(\sigma_{2}^{\nu\nu})^{2}.
\end{equation}
The last loss, multiplied by $2\sigma_{2}^{\nu\nu}Q/d$, is at most $C(\Delta u)/\Lambda$.
If $z=0$, then $\mu=0$ and directly $\mathcal K\geq3-Cd$.

We reserve the fraction $1/(16(n-1))$ of $2(A_{\alpha\beta}\sigma_{2}^{\alpha\beta})Q$ for the remaining errors and use the rest to control \eqref{eqn3.matrixbound}.
If $0\leq\mu\leq1/2$ and $\epsilon\geq0$, then $|z|\geq\mu$ and
\[
\frac{\tau Z}{\sigma_{2}^{\nu\nu}}\geq\max\{1-C\eta_{0},2\epsilon\mu\}.
\]
The scalar inequality
\[
-1+\frac{4}{n-1}-\frac{1}{16(n-1)}
+\frac{(\epsilon\mu)^{2}}{2(n-1)}-\frac{4\epsilon\mu}{n-1}+\left(1-\frac{1}{16(n-1)}\right)\max\{1,2\epsilon\mu\}
\geq\frac{1}{2(n-1)}
\]
gives a uniform positive margin for sufficiently small fixed $\eta_{0}$.
If $\epsilon\mu\leq0$, the product term is nonnegative and \eqref{eqn3.curvature} gives a positive margin directly.
If both $\mu$ and $\epsilon$ are negative, the additional $2\tau Z|\mu|/\sigma_{2}^{\nu\nu}$ in \eqref{eqn3.curvature} absorbs $4\epsilon\mu/(n-1)$ once $\Lambda_{0}$ is sufficiently large.
Thus \eqref{eqn3.normal_energy} retains $c\sigma_{2}^{\nu\nu}Q/d+c(\Delta u)Q$.
Young's inequality and $Z\geq c\Lambda d$ give
\[
C\sqrt{(\Delta u)\sigma_{2}^{\nu\nu}Q/Z}\leq\frac{c}{4}\sigma_{2}^{\nu\nu}Q/d+C(\Delta u)/\Lambda.
\]
The fixed smallness of $T$ and a sufficiently large data threshold for $Q$ absorb every loss in \eqref{eqn3.normal_energy}, a contradiction.

When $\mu>1/2$, we estimate $\mathcal K$ by projection onto $e$.
By \eqref{eqn3.stationarity}, $|(u_{\alpha\nu})|^{2}\geq ZQ/4$, so $(\Delta u)\geq cQ$ and $\sigma_{2}^{\nu\nu}\leq C(1+\Lambda)$.
If $Q$ exceeds a sufficiently large fixed multiple of $1+\Lambda$, then
\[
u_{\nu\nu}\geq\frac{\Delta u}{2},\quad
-Z_{\nu}\geq\frac{\Delta u}{4},\quad \frac{1}{2}<\mu<1,\quad
\frac{d}{Z}\leq\frac{C}{\Delta u},\quad
A_{\alpha\beta}\sigma_{2}^{\alpha\beta}\geq c\Delta u.
\]
The equation and $\sigma_{2}[(u_{\alpha\beta})]\leq(n-2)(\sigma_{2}^{\nu\nu})^{2}/(2(n-1))$ imply
\[
\frac{dQ}{2\sigma_{2}^{\nu\nu}}|z|^{2}\leq1-\mu+\frac{d(Z_{\nu}+u_{\nu\nu})}{2Z}+\frac{(n-2)d\sigma_{2}^{\nu\nu}}{4(n-1)Z}.
\]
Using this inequality and \eqref{eqn3.stationarity}, the projection of $(I+M)z+(2I-dA)e$ onto $e$ is at least
\[
1-dA(e,e)-\frac{d(Z_{\nu}+u_{\nu\nu})}{2Z}
-\frac{(n-2)d\sigma_{2}^{\nu\nu}}{4(n-1)Z}-\frac{dQ}{2\sigma_{2}^{\nu\nu}}
\frac{|z||D_{T}Z+(u_{\alpha\nu})|}{|W|}.
\]
The inequality $|v|^{2}\geq2e\cdot v-1$, valid for $|e|=1$, gives
\[
\mathcal K\geq-C\Biggl(
\frac{d(Z_{\nu}+u_{\nu\nu})}{Z}
+\frac{d\sigma_{2}^{\nu\nu}}{Z}
+\frac{dQ}{\sigma_{2}^{\nu\nu}}
\frac{|z||D_{T}Z+(u_{\alpha\nu})|}{|W|}
\Biggr).
\]
After multiplication by $2\sigma_{2}^{\nu\nu}Q/d$, the losses are at most
\[
C(1+\Lambda)Q+CQ\sqrt{\Delta u}.
\]
Together with the losses already present in \eqref{eqn3.normal_energy}, they are absorbed by $2(A_{\alpha\beta}\sigma_{2}^{\alpha\beta})Q\geq c(\Delta u)Q$, by the fixed smallness of $T$ and a sufficiently large multiple of $1+\Lambda$.
The physical boundary estimate and the two interior cases prove \eqref{eqn3.normal_quotient}.
Together with \eqref{eqn3.boundary_equation}, this gives $\sup_{\partial\Omega}u_{\nu\nu}\leq C(1+\Lambda)$.

\subsection{Global second derivative estimate}

The estimates near the boundary leave a dependence on the Hessian on $\{d=T\}$.
To absorb it, we use the weak interior estimate proved in \cite[Theorem 1.2 and Section 5]{LyuD}, in the framework of \cite{KrylovWeak,ITW}, with only the numerator estimate modified below.
In the form needed here, it gives
\begin{equation}\label{eqn3.interior}
\sup_{\{d\geq\delta\}}\Delta u
\leq\varepsilon\max\left\{1,\sup_{\partial\Omega}u_{\nu\nu}\right\}
+C_{\varepsilon,\delta},\quad \varepsilon,\delta>0.
\end{equation}
The original numerator estimate uses a positive right-hand side on a neighborhood of $\overline{\Omega}$.
Semiconvexity supplies the bounded spectral tail that allows us to replace that step using only $\|f\|_{C^{1,1}(\overline{\Omega})}$.
For the quotient in \cite[Section 5]{LyuD}, whose numerator is $u_{\xi\xi}+2\eta u_{\xi}+\eta^{2}u$, consider a sufficiently large interior maximum. Then $\Delta u$ is large and stationarity in $x,\xi,\eta$ gives, for the auxiliary variables $(\xi,\eta)\in\mathbb{S}^{n}$,
\[
|D^{2}u\,\xi|\geq1,\quad
\frac{|D^{2}u_{\xi}\,\xi|}{|D^{2}u\,\xi|}\leq C_{\beta}|\xi|,
\]
where $\beta$ is the fixed denominator parameter.
Diagonalize $D^{2}u$ and set
\[
\vartheta=\frac{u_{11\xi}}{\lambda_{1}},\quad
E=D^{2}u_{\xi}-\vartheta D^{2}u.
\]
Then $E_{11}=0$, $|\vartheta|\leq C_{\beta}|\xi|+|E||\xi|$, and
\[
(\Delta u)\operatorname{tr}E
=f_{\xi}-2f\vartheta+\sum_{i=2}^{n}\lambda_{i}E_{ii}.
\]
The bounded spectral tail and bounded $f$ give, for large $\Delta u$,
\[
\begin{aligned}
-\sigma_{2}^{ij,st}u_{ij\xi}u_{st\xi}
&=|E|^{2}-(\operatorname{tr}E)^{2}-2f_{\xi}\vartheta+2f\vartheta^{2}\\
&\geq c|E|^{2}-\frac{Cf_{\xi}^{2}}{(\Delta u)^{2}}
-2f_{\xi}\vartheta+f\vartheta^{2}\\
&\geq-C_{\beta}|\xi|^{2}.
\end{aligned}
\]
The last step completes the $|E|$ square, using $|f_{\xi}|\leq C|\xi|$ and $|\xi|\leq1$.
Since
\[
\sigma_{2}^{ij}(u_{\xi\xi})_{ij}
=f_{\xi\xi}-\sigma_{2}^{ij,st}u_{ij\xi}u_{st\xi},
\]
the numerator bound in \cite[Section 5, Step 3]{LyuD} follows with the same normalization by $(n-1)\Delta u$ and requires only $\|f\|_{C^{1,1}(\overline{\Omega})}$.
The defining function and the denominator and boundary estimates in \cite[Lemma 2.7 and Section 5]{LyuD} apply unchanged.
The quotient is $C^{2}$ for $u\in C^{4}$ and a $C^{3,1}$ defining function.
The same maximum-point calculation proves \eqref{eqn3.interior} above the fixed threshold; smaller maxima are absorbed into the constant.

Equations \eqref{eqn3.reduction}, \eqref{eqn3.boundary_equation}, \eqref{eqn3.normal_quotient}, and \eqref{eqn3.Lambda} give the first inequality below.
The second follows from \eqref{eqn3.interior} with $\delta=T$:
\[
\sup_{\Omega}\Delta u
\leq C_{T}\left(1+\sup_{\{d=T\}}\sigma_{2}^{\nu\nu}\right)
\leq2C_{T}\varepsilon\sup_{\Omega}\Delta u+C_{\varepsilon,T}.
\]
Since $T$ is fixed by the data, so is $C_{T}$.
Take $\varepsilon=(4C_{T})^{-1}$ and absorb the term involving $\sup_{\Omega}\Delta u$.
Together with \eqref{eqn2.first} and $|D^{2}u|\leq\Delta u$, this proves
\begin{equation}\label{eqn3.full}
\|u\|_{C^{2}(\overline{\Omega})}\leq C.
\end{equation}

For a smooth solution with $D^{2}u\in\overline{\Gamma}_{2}$, set
\[
u_{\varepsilon}=u+\frac{\varepsilon}{2}|x|^{2},\quad
f_{\varepsilon}=f+(n-1)\varepsilon\Delta u+\binom{n}{2}\varepsilon^{2}.
\]
Then $D^{2}u_{\varepsilon}\in\Gamma_{2}$ and $f_{\varepsilon}>0$ on $\overline{\Omega}$.
For this fixed solution, choose $\varepsilon\downarrow0$ so that $(n-1)\varepsilon\|\Delta u\|_{C^{1,1}}\leq1$.
The right-hand side and boundary norms are uniformly controlled by the original data and a fixed constant.
Applying \eqref{eqn3.full} and letting $\varepsilon\downarrow0$ proves the closed-cone estimate with the same dependence.

The classical solvability theorem \cite[Theorem 2]{CNS} and the uniform estimate \eqref{eqn3.full} give a $2$-admissible solution $u\in C^{1,1}(\overline{\Omega})$ for $f\geq0$ by standard approximation.
Uniqueness follows from the comparison principle for admissible solutions.

\section{Sharpness of the boundary value assumption}\label{sec4}

The following family is a smooth strictly convex regularization of the boundary example in \cite[Example 1]{CNSDegenerate}.
The quadratic contact of the sphere at $e_{n}$ separates the regularity of the boundary trace from that of the normal second derivative.

\begin{proposition}\label{prop4.counterexample}
Let $n\geq2$ and $\alpha\in(0,1)$.
There exist smooth strictly convex functions $u_{\varepsilon}$ on $\overline{B}_{1}$, $0<\varepsilon\leq1$, solving
\[
\sigma_{2}[D^{2}u_{\varepsilon}]=f_{\varepsilon}>0\quad\text{in }B_{1},\quad
u_{\varepsilon}=\varphi_{\varepsilon}\quad\text{on }\partial B_{1},
\]
such that
\[
\|u_{\varepsilon}\|_{C^{1}(\overline{B}_{1})}
+\|f_{\varepsilon}\|_{C^{1,1}(\overline{B}_{1})}
+\|\varphi_{\varepsilon}\|_{C^{3,\alpha}(\partial B_{1})}\leq C,
\]
where $C$ depends only on $n$ and $\alpha$, while
\[
\|D^{2}u_{\varepsilon}\|_{L^{\infty}(B_{1})}\longrightarrow\infty,\quad
\inf_{B_{1}}f_{\varepsilon}\longrightarrow0\quad\text{as }\varepsilon\downarrow0.
\]
Moreover, $\sigma_{3}[D^{2}u_{\varepsilon}]\geq0$, with $\sigma_{3}\equiv0$ for $n=2$.
\end{proposition}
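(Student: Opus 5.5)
The plan is to regularize the boundary example of \cite[Example 1]{CNSDegenerate} by an explicit ansatz that depends only on the distance to the tangent plane at $e_{n}$. Put $t=1-x_{n}\in[0,2]$ and $t_{\varepsilon}=t+\varepsilon$, and look for
\[
u_{\varepsilon}(x)=\psi_{\varepsilon}(t)+\tfrac12\phi_{\varepsilon}(t)|x'|^{2}+A|x|^{2},
\]
with smooth positive one-variable profiles $\psi_{\varepsilon},\phi_{\varepsilon}$ and a fixed $A>0$. The term $A|x|^{2}$ gives strict convexity uniformly in $\varepsilon$. Once $u_{\varepsilon}$ is convex, $\sigma_{3}[D^{2}u_{\varepsilon}]\geq0$ is automatic, $\sigma_{3}\equiv0$ for $n=2$, and $f_{\varepsilon}:=\sigma_{2}[D^{2}u_{\varepsilon}]>0$. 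So the whole content is the choice of profiles.

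\textbf{Hessian blow-up and boundary regularity.} I take $\psi_{\varepsilon}''(t)\sim t_{\varepsilon}^{\gamma}$ with $\gamma=(\alpha-1)/2\in(-1/2,0)$. Then $u_{\nu\nu}$ at $e_{n}$ is of order $\varepsilon^{\gamma}\to\infty$, which gives $\|D^{2}u_{\varepsilon}\|_{L^{\infty}}\to\infty$. On $\partial B_{1}$ near $e_{n}$ we have $t=1-\sqrt{1-|x'|^{2}}=\tfrac12|x'|^{2}+O(|x'|^{4})$, a smooth function of $x'$ with quadratic contact. Hence $\psi_{\varepsilon}(t)$ restricted to the sphere behaves like $(|x'|^{2}+2\varepsilon)^{2+\gamma}$. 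Its $C^{3,\alpha}$ norm is uniform in $\varepsilon$ precisely because $2(2+\gamma)=3+\alpha$. This is the scaling check: the fourth derivatives scale like $(|x'|^{2}+\varepsilon)^{\gamma}|x'|^{k}$, and the worst term is homogeneous of degree $2\gamma=\alpha-1$ in $|x'|$, i.e.\ exactly $C^{3,\alpha}$. The tangential term $\tfrac12\phi_{\varepsilon}(t)|x'|^{2}$ will be handled by the same homogeneity bookkeeping, and away from $e_{n}$ everything is smooth uniformly. The uniform $C^{1}$ bound is immediate because $2+\gamma>1$.

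\textbf{Making $f_{\varepsilon}$ uniformly $C^{1,1}$.} This is the main obstacle. Writing the Hessian in blocks gives
\[
u_{\alpha\beta}=(\phi_{\varepsilon}+2A)\delta_{\alpha\beta},\quad
u_{\alpha n}=-\phi_{\varepsilon}'x_{\alpha},\quad
u_{nn}=\psi_{\varepsilon}''+\tfrac12\phi_{\varepsilon}''|x'|^{2}+2A.
\]
The leading part of $f_{\varepsilon}$ is then
\[
(n-1)(\phi_{\varepsilon}+2A)\psi_{\varepsilon}''+\tbinom{n-1}{2}(\phi_{\varepsilon}+2A)^{2}-|\phi_{\varepsilon}'|^{2}|x'|^{2}+\ldots .
\]
With $A>0$ fixed, the product $A\psi_{\varepsilon}''$ is unbounded. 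The first thing I will try, mimicking the degenerate Monge--Amp\`ere example, is to move the uniform convexity into the normal direction instead of adding it tangentially. That is, I use $A|x_{n}|^{2}$ together with a small multiple of $|x'|^{2}$ built into $\phi_{\varepsilon}$, and choose $\phi_{\varepsilon}$ so that $\phi_{\varepsilon}\psi_{\varepsilon}''=t_{\varepsilon}^{2}$. This requires $\phi_{\varepsilon}\sim t_{\varepsilon}^{2-\gamma}$, so $\phi_{\varepsilon}$ is $C^{2}$ up to $t=0$.

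The mixed term is then $|\phi'|^{2}|x'|^{2}\lesssim t_{\varepsilon}^{2-2\gamma}\cdot t$, using $|x'|^{2}\leq2t$ inside $\overline{B}_{1}$. It is $C^{1,1}$ since $3-2\gamma>2$. The $\phi''|x'|^{2}$ contribution to $u_{nn}$ is bounded and smooth in the same way.

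Strict convexity is lost near $e_{n}$ in the tangential directions. I will recover it by adding a small fixed $\delta|x'|^{2}$ and checking that $\delta\psi_{\varepsilon}''$ is harmless. That is false as stated, so instead I will let the tangential coefficient be $\phi_{\varepsilon}+\delta t_{\varepsilon}^{2-\gamma}$, which is positive whenever $t_{\varepsilon}>0$. Then every term in $f_{\varepsilon}$ is of the form $t_{\varepsilon}^{a}|x'|^{2b}$ with total homogeneity (counting $|x'|^{2}\sim t$) at least $2$. Each such term is $C^{1,1}$ on $\overline{B}_{1}$ uniformly in $\varepsilon$, and $f_{\varepsilon}\geq c\,\varepsilon^{2}\to0$ at $e_{n}$.

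The key verification is a careful term-by-term homogeneity count, together with a check that the mixed-derivative square never beats the product $\phi\psi''$, so that positivity and convexity hold. If the exponents do not close, I would adjust $\gamma$ slightly within the range that keeps the boundary trace in $C^{3,\alpha}$ while $\psi_{\varepsilon}''$ still blows up.
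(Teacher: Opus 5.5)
Your final ansatz is correct, but it is a different regularization from the paper's.

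The paper takes $u_\varepsilon=(1-x_n+\varepsilon)^p+\frac{\varepsilon^{4-p}}{2}|x|^2$ with $p=(3+\alpha)/2=2+\gamma$. Its Hessian is diagonal and $f_\varepsilon$ is explicit. The uniform $C^{1,1}$ bound is the one-line scaling $\varepsilon^{4-p}t_\varepsilon^{p-2-j}\le\varepsilon^{2-j}$ for $j\le 2$, which uses $t_\varepsilon\ge\varepsilon$. As a byproduct, $f_\varepsilon\le C\varepsilon^{2}$ on all of $\overline{B}_1$.

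You instead let the tangential coefficient vary, $\phi_\varepsilon=c\,t_\varepsilon^{s}$ with $s=2-\gamma=4-p$, so at $e_n$ it equals the paper's constant $\varepsilon^{4-p}$. You put the fixed convexity into $Ax_n^2$. This makes $\phi_\varepsilon\psi_\varepsilon''$ exactly a multiple of $t_\varepsilon^{2}$. It also buys something: your right-hand sides satisfy $f_\varepsilon\ge cA\,t_\varepsilon^{s}$ once $A$ is large, so the degeneracy is concentrated at the single boundary point $e_n$. The price is the off-diagonal entries $u_{\alpha n}=-\phi_\varepsilon' x_\alpha$.

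The verifications you postponed do close.
\begin{itemize}
\item Convexity: the Schur complement of the tangential block is $\psi_\varepsilon''+2A-\frac{c\,s(s+1)}{2}t_\varepsilon^{s-2}|x'|^2$. Since $t_\varepsilon^{s-2}|x'|^2\le 3$, a fixed large $A$ gives strict convexity.
\item $C^{1,1}$ bound: the terms of $f_\varepsilon$ are multiples of $t_\varepsilon^{2}$, $t_\varepsilon^{2s}$, $t_\varepsilon^{s}$ and $t_\varepsilon^{2s-2}|x'|^2$. Each has bounded second derivatives on $\overline{B}_1$, because $|x'|^2\le 2t\le 2t_\varepsilon$ there. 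This derivative count, not a pointwise size bound such as $t_\varepsilon^{2-2\gamma}t$, is what $C^{1,1}$ requires.
\end{itemize}

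Two corrections to the write-up.
\begin{itemize}
\item Strict convexity is never lost near $e_n$, since $\phi_\varepsilon\ge c\,\varepsilon^{s}>0$, and uniform convexity is not required. The extra $\delta t_\varepsilon^{2-\gamma}$ is therefore redundant.
\item On the sphere, the term $\frac12\phi_\varepsilon|x'|^2$ has four derivatives bounded uniformly, because $s-2>0$. The $\psi_\varepsilon$ term, however, needs the paper's two-case H\"older argument from $|D^j\varphi_\varepsilon(y)|\le C(\varepsilon+|y|^2)^{(3+\alpha-j)/2}$. Your homogeneity heuristic only abbreviates that argument.
\end{itemize}
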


\begin{proof}
Set
\[
p=\frac{3+\alpha}{2}\in\left(\frac{3}{2},2\right),\quad
u_{\varepsilon}(x)=(1-x_{n}+\varepsilon)^{p}
+\frac{\varepsilon^{4-p}}{2}|x|^{2}.
\]
Its Hessian eigenvalues are $\varepsilon^{4-p}$, with multiplicity $n-1$, and
\[
\varepsilon^{4-p}+p(p-1)(1-x_{n}+\varepsilon)^{p-2}.
\]
Thus $u_{\varepsilon}$ is smooth and strictly convex, with uniformly bounded $C^{1}$ norm, and
\[
f_{\varepsilon}=(n-1)p(p-1)\varepsilon^{4-p}(1-x_{n}+\varepsilon)^{p-2}
+\binom{n}{2}\varepsilon^{8-2p}.
\]
For $j=0,1,2$, the derivatives of the first term are bounded by
\[
C\varepsilon^{4-p}\varepsilon^{p-2-j}=C\varepsilon^{2-j}.
\]
Hence $\|f_{\varepsilon}\|_{C^{1,1}(\overline{B}_{1})}\leq C$, and
$0<\inf_{B_{1}}f_{\varepsilon}\leq f_{\varepsilon}(0)\to0$.

Only a neighborhood of $e_{n}$ requires an estimate.
In the chart $x=(y,\sqrt{1-|y|^{2}})$, $|y|<1/2$, the boundary value is
\[
\varphi_{\varepsilon}(y)
=\bigl(\varepsilon+1-\sqrt{1-|y|^{2}}\bigr)^{p}
+\frac{\varepsilon^{4-p}}{2}.
\]
Here $1-\sqrt{1-|y|^{2}}$ is comparable to $|y|^{2}$; its first and third derivatives are $O(|y|)$, and its second and fourth derivatives are bounded.
Since $2p=3+\alpha$, differentiation gives
\[
|D^{j}\varphi_{\varepsilon}(y)|
\leq C(\varepsilon+|y|^{2})^{(3+\alpha-j)/2},\quad 0\leq j\leq4.
\]
For $j=0$, the constant term satisfies this bound because $4-p>p$.
The derivatives through order three are therefore uniformly bounded.

For the H\"older seminorm, take distinct $y,z$.
If
\[
|y-z|\geq\frac{1}{2}\max\{(\varepsilon+|y|^{2})^{1/2},(\varepsilon+|z|^{2})^{1/2}\},
\]
the third derivative bound gives
$|D^{3}\varphi_{\varepsilon}(y)-D^{3}\varphi_{\varepsilon}(z)|\leq C|y-z|^{\alpha}$.
Otherwise, the $1$-Lipschitz property of $(\varepsilon+|y|^{2})^{1/2}$ implies
$(\varepsilon+|w|^{2})^{1/2}\geq|y-z|$ along the segment joining $y$ to $z$.
The fourth derivative bound and $\alpha-1<0$ give the same estimate by integration along the segment.
Away from $e_{n}$, all derivatives are uniformly bounded.
Hence the boundary values have uniformly bounded $C^{3,\alpha}$ norms.

Finally,
\[
(u_{\varepsilon})_{nn}((1-\varepsilon)e_{n})
=p(p-1)(2\varepsilon)^{p-2}+\varepsilon^{4-p}\longrightarrow\infty,
\]
because $p-2=(\alpha-1)/2<0$.
The assertion about $\sigma_{3}$ follows from strict convexity.
\end{proof}

\section{Semiconvexity for the \texorpdfstring{$k$}{k}-Hessian equation on the unit ball}

We prove Theorem \ref{thm1.3} by adapting the tangential trace estimate in \cite{TrudingerBalls} to the condition $f^{1/(k-1)}\in C^{1,1}$.
All constants depend only on the quantities stated in the theorem.

\begin{proof}[Proof of Theorem \ref{thm1.3}]
Set $g=f^{1/(k-1)}$.
Since $|Df|\leq(k-1)\|Dg\|_{\infty} f^{(k-2)/(k-1)}$, the boundary comparison and the global gradient estimate in \cite[(2.1)--(2.2), Proposition 2.1]{JW} give
\[
\|u\|_{C^{1}(\overline{B}_{1})}\leq C.
\]
On the ball, these comparisons require only a $C^{2}$ extension of $\varphi$, so its $C^{3}$ norm suffices.
We first note the weighted gradient estimate
\begin{equation}\label{eqn5.gradient}
(\delta_{ab}-x_{a}x_{b})g_{a}g_{b}\leq Cg\quad\text{on }\overline{B}_{1}.
\end{equation}
Indeed, Taylor's inequality on a segment of length at most $d=1-|x|$ in the direction $-Dg$ gives $d|Dg|^{2}\leq Cg$.
For $|x|\geq1/2$ and a unit vector $\tau\perp x$, apply the one-dimensional inequality $|h'(0)|^{2}\leq2\operatorname{Lip}(h')h(0)$ to the nonnegative function
\[
h(t)=g(x\cos t+|x|\tau\sin t).
\]
This bounds the tangential gradient by $|D_{T}g|^{2}\leq Cg$.
Combining the two bounds with $1-|x|^{2}\leq2d$ proves \eqref{eqn5.gradient}; for $|x|\leq1/2$, the segment estimate alone suffices.
Only values of $g$ in the closed ball are used.

Consider the tangential trace and its quadratic correction
\[
v=\Delta u-x_{a}x_{b}u_{ab},\quad w=v+|x|^{2}.
\]
All second differentiations below hold almost everywhere, since $g$ and $D^{2}u$ belong to $C^{1,1}$.
Twice differentiating the equation and using homogeneity gives
\begin{equation}\label{eqn5.traceidentity}
\sigma_{k}^{ij}v_{ij}
=(\delta_{ab}-x_{a}x_{b})
\bigl(f_{ab}-\sigma_{k}^{ij,st}u_{ija}u_{stb}\bigr)
-4x_{a}f_{a}-2kf.
\end{equation}
Concavity of $\sigma_{k}^{1/k}$ and \eqref{eqn5.gradient} imply
\[
\begin{aligned}
\sigma_{k}^{ij}v_{ij}
&\geq(k-1)g^{k-2}(\delta_{ab}-x_{a}x_{b})g_{ab}
-\frac{k-1}{k}g^{k-3}(\delta_{ab}-x_{a}x_{b})g_{a}g_{b}\\
&\quad-4(k-1)g^{k-2}x\cdot Dg-2kg^{k-1}
\geq-Cg^{k-2}.
\end{aligned}
\]
Newton's inequalities give
\begin{equation}\label{eqn5.5}
\sum_{i=1}^{n}\sigma_{k}^{ii}
=(n-k+1)\sigma_{k-1}[D^{2}u]
\geq c(\Delta u)^{1/(k-1)}g^{k-2},\quad |D^{2}u|\leq\Delta u.
\end{equation}
Consequently,
\begin{equation}\label{eqn5.normalized}
\frac{\sigma_{k}^{ij}w_{ij}}{\sum_{i=1}^{n}\sigma_{k}^{ii}}
\geq2-C(\Delta u)^{-1/(k-1)}\quad\text{in }B_{1}.
\end{equation}

On the boundary,
\[
v=\Delta_{\partial B_{1}}\varphi+(n-1)Du\cdot x,
\]
so $w\leq C$ there.
Suppose that the maximum of $w$ exceeds a sufficiently large fixed constant and is attained at $x_{0}\in B_{1}$.
Since $v\leq2\Delta u$, the Laplacian is large at $x_{0}$ and remains large in a neighborhood.
For each fixed solution, the normalized coefficients in \eqref{eqn5.normalized} are bounded and uniformly positive definite on a sufficiently small closed ball about $x_{0}$.
When $\Delta u$ is sufficiently large, \eqref{eqn5.normalized} is strictly positive almost everywhere.
Since $w\in C^{1,1}$, the strong maximum principle \cite[Chapter 9]{GT} excludes an interior maximum.
The ellipticity constants enter only this qualitative comparison.
Thus $v\leq w\leq C$ on $\overline{B}_{1}$.

For $x\ne0$, the restriction of $D^{2}u$ to $x^{\perp}$ belongs to $\overline{\Gamma}_{k-1}\subset\overline{\Gamma}_{2}$.
Its trace is therefore nonnegative and bounds its norm.
Since
\[
v=(1-|x|^{2})\Delta u
+|x|^{2}\operatorname{tr}(D^{2}u|_{x^{\perp}}),
\]
we have $\Delta u\leq C$ for $|x|\leq1/2$ and $\operatorname{tr}(D^{2}u|_{x^{\perp}})\leq C$ for $|x|\geq1/2$.
The min--max principle gives $\lambda_{2}(D^{2}u)\leq C$.
Finally, admissibility implies $\Delta u-\lambda_{1}(D^{2}u)\geq0$, whence
\[
\lambda_{n}(D^{2}u)\geq-\sum_{i=2}^{n-1}\lambda_{i}(D^{2}u)\geq-(n-2)C.
\]
This proves the theorem.
\end{proof}

The bound passes to locally uniform limits through the second-difference inequality on line segments in $B_{1}$.

\section*{Acknowledgements}

The author is deeply grateful to Professors Dongsheng Li and Yu Yuan for their guidance and support.
This work grew out of about three years of sustained research by the author on the open problem posed by Ivochkina-Trudinger-Wang \cite{ITW}.
The author's initial insight into the semiconvexity structure was inspired by the Legendre-Lewy-Yuan rotation used in interior second derivative estimates for the $2$-Hessian equation.
AI tools were used as an aid in preliminary mathematical exploration and in refining the exposition.
All mathematical arguments were independently verified by the author, who takes full responsibility for the manuscript.

\end{document}